\definecolor{myred}{rgb}{0.75,0,0}
\definecolor{mygreen}{rgb}{0,0.5,0}
\definecolor{myblue}{rgb}{0,0,0.65}
\renewcommand{\bar}{\overline} %overline is better, but bar is shorter
\renewcommand{\tilde}{\widetilde}
\let\temp\emptyset
\let\emptyset\varnothing
\let\varnothing\temp 
\theoremstyle{plain}
\newtheorem{theorem}{Theorem}[section]
\newtheorem{lemma}[theorem]{Lemma}
\newtheorem*{lemma*}{Lemma}
\newtheorem*{proposition*}{Proposition}
\newtheorem*{truefact*}{Fact}
\theoremstyle{definition}
\theoremstyle{remark}
\newtheorem*{remark}{Remark}
\newcommand{\bb}[1]{\expandafter\newcommand\expandafter{\csname #1\endcsname}{{\mathbb {#1}}}} % turns \foo into \mathbb{foo}
\newcommand{\mcH}{\mathcal H}
\renewcommand{\a}{\alpha}
\newcommand{\g}{\gamma}
\renewcommand{\d}{\delta}
\newcommand{\ep}{\varepsilon}
\newcommand{\la}{\lambda}
\newcommand{\ds}{\mathrm{d}s}
\newcommand{\dt}{\mathrm{d}t}
\newcommand{\dx}{\mathrm{d}x}
\newcommand{\dy}{\mathrm{d}y}
\newcommand{\du}{\mathrm{d}u}
\newcommand{\mrm}[1]{\expandafter\newcommand\expandafter{\csname #1\endcsname}{{\mathrm {#1}}}}
\newcommand{\p}{\mathfrak p}
\renewcommand{\S}{\mathfrak S}
\renewcommand{\mod}{\bmod}
\newcommand{\subsum}[1]{\sum_{\substack{#1}}}
\title[Restricted sums of singular series]{Sums of singular series along arithmetic progressions and with smooth weights}
\author{Vivian Kuperberg}
\thanks{The author is supported by NSF GRFP grant DGE-1656518 as well as the NSF Mathematical Sciences Research Program through the grant DMS-2202128, and would like to thank Kannan Soundararajan for many helpful comments and discussions.}
\begin{document}
\begin{abstract}
Sums of the singular series constants that appear in the Hardy--Littlewood $k$-tuples conjectures have long been studied in connection to the distribution of primes. We study constrained sums of singular series, where the sum is taken over sets whose elements are specified modulo $r$ or weighted by smooth functions. We show that the value of the sum is governed by incidences modulo $r$ of elements of the set in the case of arithmetic progressions and by pairings of the smooth functions in the case of weights. These sums shed light on sums of singular series in other formats.
\end{abstract}
\maketitle

\nocite{MR2290492}

\section{Introduction}

The Hardy--Littlewood $k$-tuples conjectures, and the constants known as singular series that appear within them, have long been studied in connection to the distribution of primes. These conjectures state that for any $k$-tuple $\mathcal H = \{h_1, \dots, h_k\}$ of distinct integers, the number of $k$-tuples of primes of the form $(n+h_1, \dots, n+h_k)$, with $n \le x$, is given by
\begin{equation}
\sum_{n \le x} \prod_{i=1}^k \Lambda(n+h_i) = \mathfrak S(\mathcal H)x + o(1)x,
\end{equation}
where $\mathfrak S(\mathcal H)$ is the singular series
\begin{equation}\label{eq:bg:singseries}
\mathfrak S(\mcH) = \prod_{p \text{ prime}} \frac{1-\nu_{\mcH}(p)/p}{(1-1/p)^k},
\end{equation}
and $\nu_{\mcH}(p)$ denotes the number of distinct residue classes modulo $p$ occupied by the elements of $\mcH$. 

In \cite{gallaghershortintervals}, Gallagher showed that the Hardy--Littlewood conjectures imply that the distribution of primes in intervals of size $h = \la \log x$ is Poissonian for fixed $\la$, by showing that the singular series is $1$ on average. In particular, he showed that for any fixed $k$,
\begin{equation*}
\subsum{\mathcal H \subset [1,h] \\ |\mathcal H| = k} \mathfrak S(\mathcal H) \sim \subsum{\mathcal H \subset [1,h] \\ |\mathcal H| = k} 1.
\end{equation*}

In 2004, Montgomery and Soundararajan \cite{MontgomerySoundararajanPrimesIntervals}
used a more refined estimate of sums of singular series to show that when $h$ is in a larger regime, with $h/\log x \to \infty$ but $h = o(x)$, the Hardy--Littlewood conjectures imply that the distribution of primes, counted with von Mangoldt weights, becomes Gaussian with mean $\sim h$ and variance $\sim h \log \frac xh$, matching numerical data. Instead of the singular series itself, they considered alternating sums of singular series, defining $\mathfrak S_0(\mathcal H) := \sum_{\mathcal J \subset \mathcal H} (-1)^{|\mathcal H \setminus \mathcal J|} \mathfrak S(\mathcal J).$ These sums have the effect of subtracting the main term from the outset, making it easier to understand lower-order contributions. The analogous Hardy--Littlewood conjecture says that
\begin{equation*}
\sum_{n \le x} \prod_{i=1}^k \left(\Lambda(n+h_i) - 1\right) = \mathfrak S_0(\mathcal H) x + o(x),
\end{equation*}
so that each term on the left-hand side has expectation $0$. 

Montgomery and Soundararajan \cite{MontgomerySoundararajanPrimesIntervals} showed that for fixed $k$, 
\begin{equation}\label{eq:intro:msthm2}
\subsum{\mathcal H \subset [1,h] \\ |\mathcal H| = k} \mathfrak S_0(\mathcal H) = \mu_k (-h \log h + A)^{k/2} + O_k(h^{k/2-1/(7k) + \ep}),
\end{equation}
where $\mu_k = 1 \cdot 3 \cdots (k-1)$ if $k$ is even, and $0$ if $k$ is odd, and $A = 2-\g_0 - \log 2\pi$, with $\g_0$ the Euler--Mascheroni constant. Their results depend crucially on work of Montgomery and Vaughan \cite{MontgomeryVaughanReducedResidues}
on the distribution of reduced residues mod $q$ in short intervals.

Here we develop analogs of the work of Montgomery and Vaughan as well as that of Montgomery and Soundararajan by studying sums of singular series with added conditions on the set $\mathcal H$. First, instead of summing over all subsets of $[1,h]$, we restrict to sets whose elements lie in arithmetic progressions. For a fixed modulus $r$ and congruence classes $c_1, \dots, c_k$ mod $r$, we study the sum
\begin{equation}\label{eq:mvarb:Rkhrcidef}
R_k(h;r,c_1,\dots,c_k) := \subsum{\mathcal H = \{h_1, \dots, h_k\} \subset [1,h] \\ |\mathcal H| = k \\ h_i \equiv c_i \mod{r}} \S_{0,r}(\mcH),
\end{equation}
where $\S_{0,r}(\mathcal H) = \sum_{\mathcal J \subset \mcH} \S_r(\mathcal J)(-1)^{|\mcH \setminus \mathcal J|}$ and $\S_r(\mcH)$ is the singular series of $\mcH$ away from $r$, given by
\begin{equation}\label{eq:bg:singseriesawayq}
\mathfrak S_r(\mcH) := \prod_{\substack{p \text{ prime} \\p \nmid r}} \frac{1-\nu_{\mcH}(p)/p}{(1-1/p)^k}.
\end{equation}

The study of these sums is of interest for two reasons. First, they appear in the work of Lemke Oliver and Soundararajan in \cite{lemkeoliversound} on bias in the distribution of consecutive primes in arithmetic progressions. Specifically, Lemke Oliver and Soundararajan conjecture that if $\pi(x;q,(a,b))$ is defined as the number of primes $p \le x$ with $p \equiv a \mod q$ and $p_{\text{next}} \equiv b \mod q$, then
\begin{equation*}
\pi(x;q,(a,b)) \sim \frac 1q \int_2^x \alpha(y)^{\epsilon_q(a,b)} \left(\frac{q}{\phi(q)\log y}\right)^2 \mathcal D(a,b;y)\dy,
\end{equation*}
where $\alpha(y) :- 1-\frac{q}{\phi(q)\log y}$, $\epsilon_q(a,b)$ is a constant defined only in terms of $q,a,$ and $b$, and
\begin{equation*}
\mathcal D(a,b;y) := \sum_{\substack{h > 0 \\ h \equiv b - a \mod q}} \sum_{\mathcal A \subset \{0,h\}} \sum_{\substack{\mathcal T \subset [1,h-1] \\ (t+a,q) = 1 \forall t\in \mathcal T}} (-1)^{|\mathcal T|} \mathfrak S_{0,q}(\mathcal A \cup \mathcal T)\left(\frac{q}{\phi(q)\alpha(y)\log y}\right)^{|\mathcal T|} \alpha(y)^{h\phi(q)/q}.
\end{equation*}
They proceed to heuristically estimate $\mathcal D(a,b;y)$, and thus $\pi(x;q,(a,b))$, by estimating a weighted version of $R_2(h;r,c_1,c_2)$. Our results generalize these arguments by estimating $R_k(h;r,c_1,c_2)$ for all $k$, which is necessary for understanding some of the error terms appearing in Lemke Oliver and Soundararajan's heuristic. Secondly, restricting sums of singular series in this manner may shed light on other questions about sums of singular series. We show in Theorem \ref{thm:mvarb:msthm2foraps} that the asymptotics for \eqref{eq:mvarb:Rkhrcidef} are governed by incidences among the $c_i$'s mod $r$. As discussed in \cite{KuperbergOddMoments}, we do not yet know the asymptotic average size of sums of $\mathfrak S_0(\mathcal H)$ when $|\mathcal H|$ is odd. The results of these more refined averages of singular series may clarify where the main term should be coming from for sums of singular series with an odd number of terms.

The sums over $\mathfrak S_{0,r}$ admit the expansion
\begin{equation*}
\subsum{1 \le h_1, \dots, h_k \le h \\ h_i \equiv c_i \mod r \: \forall i \\\text{distinct}} \mathfrak S_{0,r}(\{h_1, \dots, h_k\})= \subsum{q_1, \dots, q_k \\ q_i > 1 \\ (q_i,r) = 1} \frac{\mu(q_i)}{\phi(q_i)} \subsum{a_1, \dots, a_k \\ 1\le a_i \le q_i \\ (a_i,q_i) = 1 \\ \sum_i a_i/q_i \in \Z} \subsum{1 \le h_1, \dots, h_k \le h \\ h_i \equiv c_i \mod r \text{distinct}}\prod_{i=1}^k e\left(\frac{a_ih_i}{q_i}\right).
\end{equation*}
Following \cite{MontgomerySoundararajanPrimesIntervals} and \cite{MontgomeryVaughanReducedResidues}, we first consider a related quantity, where the summands $q_i$ are restricted to divide a secondary modulus $q > 1$ and the $h_i$'s are not necessarily distinct, to get
\begin{equation}\label{eq:mvarb:vkdefwmod}
V_k(q,h;r,c_1, \dots, c_k) :=
\subsum{q_1, \dots, q_k \\ 1 < q_i|q \\ (q_i,r) = 1} \frac{\mu(q_i)}{\phi(q_i)} \subsum{a_1, \dots, a_k \\ 1\le a_i \le q_i \\ (a_i,q_i) = 1 \\ \sum_i a_i/q_i \in \Z} \prod_{i=1}^k E_{r,c_i}\left(\frac{a_ih_i}{q_i}\right),
\end{equation}
where for $\a \in \R$,
\begin{equation}\label{eq:Ercia}
E_{r,c_i}(\a) := \subsum{m \le h \\ m \equiv c_i \mod r} e(m\a).
\end{equation}

In order to state our result, we will need to fix some notation concerning perfect matchings of $[1,k]$. For $k \ge 1$, a \emph{perfect matching} $\sigma$ of $[1,k]$ is a set $\sigma = \{(i,j)\}$ of unordered pairs in $[1,k]$ so that each element is paired with exactly one other element, i.e. each $i$ appears in exactly one pair $(i,j)$, and $i \ne j$. Since each pair $(i,j) \in \sigma$ is unordered, we will generally choose to write the representative with $i < j$, so that $\sigma \{(i,j)\}$ with $i < j$. Let $\mathcal B_k$ denote the set of perfect matchings of $[1,k]$, so that
\begin{equation}\label{eq:mvarb:matchingnotation}
\mathcal B_k := \left\{ \sigma = \{(i,j)\} : \begin{array}{l} \forall (i,j) \in \sigma, 1 \le i < j \le k \\ \forall i \in [1,k], \exists! j \in [1,k] \text{ with } (i,j) \in \sigma \text{ or } (j,i) \in \sigma \end{array} \right\}.
\end{equation}
Note that when $k$ is odd, $\mathcal B_k = \emptyset$. Moreover, for a set of integers $\{a_1, \dots, a_k\}$, we will denote by $\mathcal B(a_1, \dots, a_k)$ the set of matchings of $\{a_1, \dots, a_k\}$ into pairs, so that $\mathcal B_k = \mathcal B([1,k])$. 

In Section \ref{sec:pfofmsthm1}, we prove the following result, which mirrors Theorem 1 of \cite{MontgomerySoundararajanPrimesIntervals}.
\begin{theorem}\label{thm:mvarb:msthm1foraps}
Fix a modulus $r \ge 1$, an integer $k \ge 1$, and $k$ congruence classes $c_1, \dots, c_k$ modulo $r$. Define $\mathcal B_k$ as in \eqref{eq:mvarb:matchingnotation}. Let $q \ge 1$ be a squarefree integer with $(r,q) = 1$, and define $V_k(q,h;r,c_1,\dots,c_k)$ as in \eqref{eq:mvarb:vkdefwmod}. For $h \ge 3$,
\begin{equation*}
V_k(q,h;r,c_1,\dots, c_k) = \subsum{\sigma \in  \mathcal B_k} \prod_{(i,j) \in \sigma} V_2(q,h;r,c_i,c_j) + O_{r,k}\left(h^{k/2-1/(7k)}\left(\frac{q}{\phi(q)}\right)^{2^k + k/2}\right).
\end{equation*}
\end{theorem}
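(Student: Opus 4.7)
The plan is to adapt the strategy of Montgomery and Soundararajan in \cite{MontgomerySoundararajanPrimesIntervals} to the arithmetic-progression setting. Expanding $V_k(q,h;r,c_1,\dots,c_k)$ via \eqref{eq:mvarb:vkdefwmod} produces a multiple sum over tuples $(q_1,\dots,q_k;a_1,\dots,a_k)$ with $1<q_i\mid q$, $(q_i,r)=1$, $(a_i,q_i)=1$, and $\sum_i a_i/q_i\in\Z$. The first step is to classify each such tuple by the unique finest set partition $\pi$ of $[1,k]$ with the property that $\sum_{i\in B} a_i/q_i\in\Z$ for every block $B\in\pi$. Because $q_i>1$ and $(a_i,q_i)=1$ preclude $a_i/q_i\in\Z$, no singleton block can occur: every block of $\pi$ has size at least $2$.

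Partitions $\pi$ all of whose blocks have size exactly $2$ are precisely the perfect matchings $\sigma\in\mathcal B_k$. For a pair $(i,j)\in\sigma$, the pairing constraint $a_i/q_i+a_j/q_j\in\Z$ combined with the coprimality conditions forces $q_i=q_j$ and $a_j\equiv -a_i\pmod{q_i}$; the inner sum thus factorizes across pairs, and the total contribution from $\sigma$ equals $\prod_{(i,j)\in\sigma}V_2(q,h;r,c_i,c_j)$. Summing over $\sigma\in\mathcal B_k$ yields the stated main term.

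The remaining work is to bound the contribution of partitions $\pi$ containing a block $B$ with $|B|\ge 3$. For such a block, only one integral-sum constraint couples the variables $\{a_i/q_i\}_{i\in B}$, so cancellation in $\prod_{i\in B}E_{r,c_i}(a_i/q_i)$ can be exploited. Since $(q,r)=1$, a Chinese-remainder split reduces $E_{r,c}(a/q)$ to a geometric sum, yielding the pointwise bound $|E_{r,c}(a/q)|\ll_r \min(h/r,\,1/\|ra/q\|)$, with $\|\cdot\|$ the distance to the nearest integer. Combined with an $L^2$-integration argument over the circle in the style of Montgomery--Vaughan \cite{MontgomeryVaughanReducedResidues}, this should yield a saving of $h^{-1/(7k)}$ over the perfect-matching contribution for each oversized block, while the divisor sums over $q_i\mid q$ with weights $\mu/\phi$ are responsible for the $(q/\phi(q))^{2^k+k/2}$ factor.

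The main obstacle is this final error estimate, which requires an AP-sensitive version of the exponential-sum machinery of \cite{MontgomeryVaughanReducedResidues, MontgomerySoundararajanPrimesIntervals}: the dyadic decomposition over denominators and the moment bounds must still interact correctly with the coset constraints $m\equiv c_i\pmod r$ and $(q_i,r)=1$. The hypothesis $(q,r)=1$ is crucial here, as it lets Chinese Remainder decouple $r$ from the $q_i$, allowing the full $r$-dependence to be absorbed into the implicit $O_{r,k}$ constant; everything else is either a formal consequence of the partition decomposition or a routine estimation of weighted divisor sums.
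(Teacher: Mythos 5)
Your overall strategy (a pointwise bound on $E_{r,c}$ of strength $\min(h/r+1,\|r\alpha\|^{-1})$, which is equivalent to the paper's majorant $F_r(\alpha)$, combined with the Montgomery--Vaughan/Montgomery--Soundararajan moment machinery) is the right one, but the combinatorial decomposition at the heart of your argument has a genuine gap. First, the ``unique finest partition'' you classify by does not exist: for $k=4$, with $2\mid q$, take $q_1=\dots=q_4=2$ and $a_i=1$, so $a_i/q_i=1/2$ for all $i$; then all three perfect matchings of $[1,4]$ are finest partitions into blocks with integral sums. Consequently the decomposition of $V_k$ into disjoint classes indexed by $\pi$ is not well defined, and the identification ``the contribution from $\sigma$ equals $\prod_{(i,j)\in\sigma}V_2$'' cannot hold simultaneously for every $\sigma$: the sum $\sum_{\sigma\in\mathcal B_k}\prod_{(i,j)\in\sigma}V_2(q,h;r,c_i,c_j)$ counts tuples such as the one above several times, while $V_k$ counts them once, and the resulting discrepancy (tuples whose moduli have coincidences beyond a single pairing) must itself be shown to be $O_{r,k}\bigl(h^{k/2-1/(7k)}(q/\phi(q))^{2^k+k/2}\bigr)$. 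In the paper this is precisely the step of classifying by the equality pattern of the $q_i$'s, bounding everything not ``equal in pairs and otherwise distinct'' (Theorem \ref{thm:mvarb:mvoddterms}), and then showing that the distinctness condition on the pairs can be dropped; none of this appears in your sketch.

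Second, your error analysis only contemplates blocks of size at least $3$, to be handled by an $L^2$ estimate in the style of Lemma 8 of \cite{MontgomeryVaughanReducedResidues}. That estimate bounds tuples whose moduli are \emph{not} equal in pairs; it does not cover the critical off-diagonal configurations in which the $q_i$ are equal in pairs while some pairwise sum $a_i/q_i+a_j/q_j$ is non-integral (only the total sum being integral). In your classification these land in oversized blocks, yet they are not controlled by the Lemma 8 mechanism; the paper treats them separately by writing the pairwise sum as $b_i/q_i$ and bounding the resulting sums $W_{\mathcal J}$ by $h^{|\mathcal J|/2}(q/\phi(q))^{2^{|\mathcal J|}}$, together with $V_2(q,h;r,c_1,c_2)\ll_r h\,q/\phi(q)$ for the remaining factors. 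Without an argument of this kind, plus a repair of the overcounting above (inclusion--exclusion or the paper's reorganization by moduli patterns), the main term you exhibit is not known to differ from $V_k$ by only the stated error. The pieces you do give are correct: singleton blocks are impossible, within a genuine pair the constraint forces $q_i=q_j$ and $a_j\equiv-a_i\pmod{q_i}$, and $(q,r)=1$ lets the $r$-dependence be absorbed into the implied constant; the missing content is the bookkeeping of extra coincidences and the paired-moduli, non-integral-pairwise-sum terms.
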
 

In order to state our main result on the asymptotics of $R_k(h;r,c_1,\dots,c_k)$, we define some further notation.
For $1 \le \ell \le r$, define
\begin{equation*}
\mathcal C_{\ell} := \left\{i: c_i \equiv \ell \mod r\right\}.
\end{equation*}
Note that some of the sets $\mathcal C_{\ell}$ may be empty, and that $\bigcup_{\ell = 1}^r \mathcal C_{\ell} = [1,k]$.
We will say that a partition $\mathcal P = \{\mathcal S_1, \dots, \mathcal S_M\}$ of $[1,k]$ \emph{refines} $\{\mathcal C_\ell\}_{\ell \in [1,k]}$ if for each $\mathcal S_m \in \mathcal P$, there exists some $\ell$ with $\mathcal S_m \subset \mathcal C_\ell$; note that $\ell$ is then unique. For such a partition, write $\mathcal P \preceq \{\mathcal C_\ell\}_{\ell \in [1,k]}$ and define $c(\mathcal S_m)$ to be the value $\ell$ with $\mathcal S_m \subset \mathcal C_\ell$.

\begin{theorem}\label{thm:mvarb:msthm2foraps}
Fix a modulus $r \ge 1$ and an integer $k \ge 1$, as well as $k$ congruence classes $c_1, \dots, c_k$ modulo $r$. Define $R_k(h;r,c_1, \dots, c_k)$ by \eqref{eq:mvarb:Rkhrcidef}. Then for $h \ge 3$, 
\begin{align}
R_k(h;r,c_1,\dots, c_k) = &\sum_{0 \le j \le k/2} (-1)^j \subsum{\mathcal P \preceq \{\mathcal C_\ell\}_{\ell \in [1,k]} \\ \mathcal P = \{\mathcal S_1, \dots, \mathcal S_{k-j}\} \\ |\mathcal S_m| = 2 \:\forall 1 \le m \le j \\ |\mathcal S_m| = 1 \: \forall j < m \le k-j} \left(\frac hr \subsum{d|Q \\ d>1} \frac{\mu(d)^2}{\phi(d)}\right)^j \nonumber\\
&\sum_{\sigma \in \mathcal B(j+1, \dots, k-j)} \prod_{(i_1,i_2) \in \sigma} V_2(Q,h; r,c(\mathcal S_{i_1}), c(\mathcal S_{i_2})), + O_{r,k}(h^{k/2-1/(7k) + \ep}). 
\end{align}

In particular, if $\#\tilde{\mathcal B}(c_1, \dots, c_k)$ is the number of ways to pair the $c_i$'s such that every pair has equal values, then 
\begin{equation*}
R_k(h;r,c_1, \dots, c_k) = \#\tilde{\mathcal B}(c_1, \dots, c_k) \left( - h \frac{\phi(r)}{r} \log h  + C_0(r)h\right)^{k/2} + O_{r,k}(h^{k/2}(\log h)^{k/2-1}).
\end{equation*}
\end{theorem}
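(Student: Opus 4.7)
The plan is to expand $\mathfrak S_{0,r}(\mathcal H)$ via its Fourier representation (as displayed right before \eqref{eq:mvarb:vkdefwmod}), truncate the moduli $q_i$ to divisors of a large squarefree $Q$, and then use Möbius inversion on the partition lattice of $[1,k]$ to move from distinct $h_i$'s to unconstrained $h_i$'s. Applying Theorem \ref{thm:mvarb:msthm1foraps} to the resulting $V_m$-sums then produces the pairing structure. Choosing $Q = \prod_{p \le P,\, p \nmid r} p$ with $P$ a suitable power of $h$ keeps the truncation error within $O_{r,k}(h^{k/2 - 1/(7k) + \ep})$, as in the proof of Theorem 2 of \cite{MontgomerySoundararajanPrimesIntervals}.

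Writing $\sum_{h_i \text{ distinct}} = \sum_{\mathcal P} \mu_{\mathrm{lat}}(\hat 0, \mathcal P) \sum_{h_i = h_{i'} \text{ whenever } i \sim_{\mathcal P} i'}$ with $\mu_{\mathrm{lat}}(\hat 0,\mathcal P) = \prod_{\mathcal S \in \mathcal P}(-1)^{|\mathcal S|-1}(|\mathcal S|-1)!$, only partitions $\mathcal P$ refining $\{\mathcal C_\ell\}$ survive, since otherwise the collapse forces $h_i = h_{i'}$ with $c_i \not\equiv c_{i'} \mod r$. For such a $\mathcal P$ with $j$ blocks of size $2$, $k-2j$ singletons, and possibly some blocks of size $\ge 3$, I would analyze the Fourier sum blockwise. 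On a block $\{i,i'\}$ of size $2$, the dominant contribution comes from the diagonal $q_i = q_{i'}$, $a_i + a_{i'} \equiv 0 \mod q_i$, producing a factor of $\left(\frac{h}{r} + O(1)\right) \sum_{d \mid Q,\, d > 1} \mu(d)^2/\phi(d)$ and a Möbius weight $-1$; blocks of size $\ge 3$ can be handled by mimicking the pairing-elimination argument underlying Theorem \ref{thm:mvarb:msthm1foraps} and absorbed into the $h^{-1/(7k)}$ error. Across the remaining singleton blocks, the global integrality constraint $\sum a_i/q_i \in \mathbb Z$ restricts to the singletons (the size-$2$ diagonal pieces contribute $0$ to that sum), leaving precisely a $V_{k-2j}(Q,h; r, c(\mathcal S_{j+1}),\dots,c(\mathcal S_{k-j}))$-sum, which Theorem \ref{thm:mvarb:msthm1foraps} decomposes into the pairings $\sigma \in \mathcal B(j+1,\dots,k-j)$.

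Assembling these ingredients, with $j$ copies of the $-1$ Möbius weight yielding the $(-1)^j$ in the formula, gives the first displayed identity. For the second, I would substitute the asymptotic $V_2(Q,h;r,c,c') = -h(\phi(r)/r)\log h + C_0(r)h + \text{lower order}$ when $c \equiv c' \mod r$ (and $O_r(h)$ when $c \not\equiv c'$), and observe that the maximal power of $\log h$ is achieved only for $j = 0$ with a matching in which every paired pair of $c$'s agree; there are exactly $\#\tilde{\mathcal B}(c_1,\dots,c_k)$ such matchings. The $j \ge 1$ terms and the off-diagonal $V_2$ contributions each lose a factor of $\log h$ and fit inside $O_{r,k}(h^{k/2}(\log h)^{k/2-1})$. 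The main obstacle is controlling the blocks of size $\ge 3$: a crude bound only yields $h^{k/2}$, not the needed $h^{k/2 - 1/(7k) + \ep}$, so the pairing-elimination machinery from the proof of Theorem \ref{thm:mvarb:msthm1foraps} must be adapted to this mixed setting, and the interaction of the integrality constraint with the blockwise collapse needs careful bookkeeping.
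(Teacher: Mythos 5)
Your derivation of the first displayed identity follows essentially the same route as the paper: inclusion--exclusion over the partition lattice (the paper phrases it via products of $\delta_{ij}$ restricted to pairs within the same class $\mathcal C_\ell$, with weights $w(\mathcal P)$ equal to your $\mu_{\mathrm{lat}}(\hat 0,\mathcal P)$), discarding blocks of size $\ge 3$ by the Montgomery--Soundararajan argument, extracting the diagonal $a_i/q_i + a_{i'}/q_{i'} \in \Z$ from each doubleton to produce the factor $\bigl(\tfrac hr + O(1)\bigr)\sum_{d\mid Q,\,d>1}\mu(d)^2/\phi(d)$, and applying Theorem \ref{thm:mvarb:msthm1foraps} to the residual $V_{k-2j}$. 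The concerns you flag (size-$\ge 3$ blocks, off-diagonal doubleton contributions) are handled in the paper exactly as you suggest, by importing the corresponding estimates from \cite{MontgomerySoundararajanPrimesIntervals}, so that part is fine.

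The genuine gap is in your deduction of the second statement. You claim the $j\ge 1$ terms ``each lose a factor of $\log h$'' and can be absorbed into $O_{r,k}(h^{k/2}(\log h)^{k/2-1})$. They cannot: since $\sum_{d\mid Q,\,d>1}\mu(d)^2/\phi(d) = \prod_{p\le y,\,p\nmid r}(1-1/p)^{-1} - 1 \asymp \log h$, the factor $\bigl(\tfrac hr\sum_{d\mid Q,\,d>1}\mu(d)^2/\phi(d)\bigr)^j$ is of order $(h\log h)^j$, and $V_2(Q,h;r,c,c) = \tfrac hr\sum_{d\mid Q,\,d>1}\tfrac{\mu(d)^2}{\phi(d)} - h\tfrac{\phi(r)}{r}\log h + O_r(h)$ is itself of order $h\log h$. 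Hence every $j$ contributes at the full order $(h\log h)^{k/2}$, and keeping only $j=0$ yields the wrong constant: the $j=0$ term alone is $\#\tilde{\mathcal B}\cdot V_2(Q,h;r,0,0)^{k/2}$, whose leading coefficient still contains the $Q$-dependent quantity $\sum_{d\mid Q}\mu(d)^2/\phi(d)$ and does not equal $\bigl(-h\tfrac{\phi(r)}{r}\log h + C_0(r)h\bigr)^{k/2}$. The paper instead observes that each perfect pairing of the $c_i$'s into equal pairs arises once for every choice of $j$ of its $k/2$ pairs to serve as doubletons of $\mathcal P$, giving a factor $\binom{k/2}{j}$, and then sums the alternating binomial series to obtain $\#\tilde{\mathcal B}\bigl(V_2(Q,h;r,0,0) - \tfrac hr\sum_{d\mid Q,\,d>1}\tfrac{\mu(d)^2}{\phi(d)}\bigr)^{k/2}$; it is precisely this cancellation that removes the $\sum_{d\mid Q}\mu(d)^2/\phi(d)$ term and produces the stated main term. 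Your argument is missing both this combinatorial identity and the cancellation, so the final asymptotic does not follow as written.
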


\begin{remark}
When all the $c_i$'s are congruent mod $r$, then $\#\tilde{\mathcal B}(c_1, \dots, c_k) = \mu_k$ and this theorem implies Theorem 2 of \cite{MontgomerySoundararajanPrimesIntervals}. When the $c_i$'s are not all congruent, the estimate depends crucially on the precise arrangement of the $c_i$'s. Nevertheless, while the value of the main term is dependent on the $c_i$'s, we always have the upper bound $R_k(h;r,c_1, \dots, c_k) \ll_{r,k} h^{k/2}(\log h)^{k/2}$.
\end{remark}

The framework of Theorems \ref{thm:mvarb:msthm1foraps} and \ref{thm:mvarb:msthm2foraps} also applies to sums of singular series weighted by smooth functions. Let $f_1, \dots, f_k:\R){\ge 0} \to \C$ be functions with compact supports contained in $(0,\infty)$ and such that $|\hat{f_i}(\xi)|\ll \xi^{-2}$, where the Fourier transform $\hat f$ is defined as
\[\hat f(\xi) := -\int_{\infty}^\infty f(x) e^{-2\pi i x \xi} \dx.\]
For $h \in \N$, we are interested in the quantity
\begin{equation}\label{eq:Rkh-smooth-def}
R_k(h;f_1,\dots,f_k) := \subsum{h_1, \dots, h_k \in \Z \\ \text{distinct}}\prod_{i=1}^k f_i\left(\frac{h_i}{h}\right) \mathfrak S_0(\{h_1, \dots, h_k\}).
\end{equation}
The size of $R_k(h;f_1,\dots,f_k)$ as $h \to \infty$ is similar in structure to the analogous result for sums of singular series along arithmetic progressions: the main term is a sum over perfect matchings of $[1,k]$, and for each matching $\sigma$ the contribution is determined by interactions of $f_i$ and $f_j$ for each pair $(i,j)\in \sigma$. 

We set some notation before stating our results.
Define
\begin{equation}\label{eq:Vqhfi-def}
V(q,h;f_1, \dots, f_k) := \sum_{1 < q_1, \dots, q_k |q} \prod_{i=1}^k \frac{\mu(q_i)}{\phi(q_i)} \subsum{a_1, \dots, a_k \\ 1 \le a_i \le q_i \\ (a_i,q_i) = 1 \\ \sum_i a_i/q_i \in \Z} E_{f_1,h}\left(\frac{a_1}{q_1}\right) \cdots E_{f_k,h}\left(\frac{a_k}{q_k}\right),
\end{equation}
where for $\a \in \R$ and $f$ smooth, with compact support, and such that $|\hat{f}(\xi)| = O(|\xi|^{-2})$,
\begin{equation}\label{eq:mvaw:defEaphisum}
E_{f,h}(\a) := \sum_{m=-\infty}^\infty f\left(\frac mh\right) e(m\a).
\end{equation}

Since $f$ is smooth, $E_{f,h}(\a)$ is a smoother indicator function of values of $\a$ that are close to $0$. In particular, by Poisson summation,
\begin{equation*}
E_{f,h}(\a) = h \sum_{n=-\infty}^\infty \hat{f}(h(n-\a)).
\end{equation*}
By assumption, $\hat{f}(\xi) = O(|\xi|^{-2})$. For any real number $\a$, only one value of $\a-n$ will be in the interval $[-1/2,1/2)$; let $\bar{\a}$ denote this value, so that $\bar{\a}$ is the representative of $\a \mod 1$ satisfying $-1/2 \le \bar{\a} < 1/2$. Then
\begin{equation}\label{eq:Efh_fourier}
E_{f,h}(\a) = h \hat{f}(-h\bar{\a}) + h \sum_{n=1}^\infty O((hn)^{-2})  = h \hat{f}(-h\bar{\a}) + O(h^{-1}).
\end{equation}

The following results about $V_k(q,h;f_1,\dots,f_k)$ hold, analogous to Theorem \ref{thm:mvarb:msthm1foraps} and Lemma \ref{lem:mvarb:V2comp}.

%Equivalent of Theorem 1.1
\begin{theorem}\label{thm:msthm1-for-smooth}
Fix $k \ge 1$ and $k$ smooth functions $f_1, \dots, f_k:\R_{\ge 0} \to \C$ with compact supports $\mathrm{supp}(f_i) \subset (0,\infty)$ and such that $|\hat f_i(\xi)| = O(|\xi|^{-2})$ for all $1\le i\le k$. Define $\mathcal B_k$ as in \eqref{eq:mvarb:matchingnotation}. Let $q \ge 1$ be a squarefree integer, and define $V_k(q,h;f_1,\dots,f_k)$ as in \eqref{eq:Vqhfi-def}. For $h \ge 3$,
\begin{equation*}
V_k(q,h;f_1,\dots, f_k) = \subsum{\sigma \in  \mathcal B_k} \prod_{(i,j) \in \sigma} V_2(q,h;f_i,f_j) + O_{f_1,\dots,f_k}\left(h^{k/2-1/(7k)}\left(\frac{q}{\phi(q)}\right)^{2^k + k/2}\right).
\end{equation*}
\end{theorem}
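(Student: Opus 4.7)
The plan is to mirror the proof of Theorem~\ref{thm:mvarb:msthm1foraps}, with the arithmetic-progression exponential sum $E_{r,c_i}$ replaced by the smooth sum $E_{f_i,h}$, using the Fourier identity \eqref{eq:Efh_fourier} in place of the explicit geometric-series evaluation of $E_{r,c_i}$. First I would substitute \eqref{eq:Efh_fourier} into the definition \eqref{eq:Vqhfi-def}, writing
\[
E_{f_i,h}(a_i/q_i) \;=\; h\,\hat f_i\bigl(-h\,\overline{a_i/q_i}\bigr) \;+\; O(h^{-1}),
\]
and expand the product $\prod_{i=1}^k E_{f_i,h}(a_i/q_i)$ into $2^k$ subsums indexed by which factors contribute the ``main piece'' $h\hat f_i(-h\overline{a_i/q_i})$ and which contribute the $O(h^{-1})$ tail. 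The subsum in which every factor takes the main piece will carry the matching main term of the theorem; the remaining $2^k-1$ subsums will be absorbed into the error, since each tail factor gains a full factor of $h^{-1}$.

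For the leading subsum I would reorganize the sum over $(q_1,\ldots,q_k,a_1,\ldots,a_k)$ according to the pairing structure of the Farey fractions $a_i/q_i \bmod 1$. Namely, group the indices $i \in [1,k]$ by common value of $\overline{a_i/q_i}$; the constraint $\sum_i a_i/q_i \in \Z$ together with $q_i > 1$ forces every such group to have size at least $2$. The ``main'' configurations are those in which $[1,k]$ is partitioned into pairs $(i,j)$ with $\overline{a_i/q_i} = -\overline{a_j/q_j}$, i.e., a perfect matching $\sigma \in \mathcal{B}_k$; unfolding the definition of $V_2$ one checks directly that these configurations contribute exactly $\sum_{\sigma \in \mathcal{B}_k} \prod_{(i,j) \in \sigma} V_2(q,h;f_i,f_j)$, the claimed main term.

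The main obstacle is bounding the remaining ``off-matching'' configurations --- those whose grouping contains a block of size $\ge 3$, together with the $2^k - 1$ tail subsums --- by $O_{f_1,\ldots,f_k}\bigl(h^{k/2 - 1/(7k)} (q/\phi(q))^{2^k + k/2}\bigr)$. For the block-of-size-$\ge 3$ contributions I would use the pointwise bound $|E_{f_i,h}(a/q)| \ll_{f_i} \min\bigl(h, \, q^2 / (h\,\hat a^{\,2})\bigr)$, where $\hat a$ denotes the least nonzero absolute residue of $a$ modulo $q$, obtained from the hypothesis $|\hat f_i(\xi)| \ll |\xi|^{-2}$ via \eqref{eq:Efh_fourier}. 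Combined with H\"older's inequality and the Montgomery--Vaughan-style moment bounds for exponential sums at Farey fractions, this yields the required saving; the $1/(7k)$ gain arises exactly as in the AP argument underlying Theorem~\ref{thm:mvarb:msthm1foraps}. The hardest step is the moment estimate for the size-$\ge 3$ blocks, but since the decay of $\hat f_i$ is at least as strong as the cancellation available for the AP analog $E_{r,c_i}$, no genuinely new analytic ingredient beyond the AP case should be needed; the dependence on the $f_i$ can be packaged into the implicit constant via norms such as $\sup_\xi |\xi|^2 |\hat f_i(\xi)|$ and $\|f_i\|_\infty \cdot |\mathrm{supp}(f_i)|$.
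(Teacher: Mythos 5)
Your overall architecture (main term from paired configurations, everything else pushed into a Montgomery--Vaughan-type moment bound) is the right one and matches the paper, which proves this theorem by rerunning the proof of Theorem \ref{thm:mvarb:msthm1foraps} after establishing the smooth analogs of Montgomery and Vaughan's Lemmas 4 and 6 (Lemmas \ref{lem:mv-lemma4-smooth} and \ref{lem:mv-lemma6-smooth}). But two of your intermediate claims are false or unsupported, and they sit exactly where the difficulty lives. First, the assertion that the constraint $\sum_i a_i/q_i \in \Z$ together with $q_i>1$ forces every group of indices with common value of $\overline{a_i/q_i}$ to have size at least $2$ is simply wrong: $\tfrac12+\tfrac13+\tfrac17+\tfrac1{42}=1$ gives four admissible fractions with pairwise distinct values and no block structure at all. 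The correct structural fact (used by Montgomery--Vaughan) is about primes: each prime dividing some $q_i$ must divide at least two of the $q_i$'s. Consequently your dichotomy ``perfect matching of fraction values'' versus ``a block of size $\ge 3$'' is not exhaustive, and the classification that actually works is by the tuple of denominators $(q_1,\dots,q_k)$ --- equal in pairs versus not. The not-equal-in-pairs terms are the content of Theorem \ref{thm:mvarb:mvoddterms} (a genuine moment computation, not a combinatorial reduction), and even among the equal-in-pairs terms one must separately control the configurations where $q_i=q_j$ but $a_i/q_i+a_j/q_j\notin\Z$ --- the $W_{\mathcal J}$ terms in Section \ref{sec:pfofmsthm1} --- before the main term collapses to $\sum_{\sigma}\prod_{(i,j)\in\sigma} V_2(q,h;f_i,f_j)$. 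Your proposal silently identifies the main term with the $\mathcal J=\emptyset$ piece without doing this bookkeeping.

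Second, your opening move --- expanding each $E_{f_i,h}(a_i/q_i)=h\hat f_i(-h\,\overline{a_i/q_i})+O(h^{-1})$ and discarding the $2^k-1$ tail subsums ``since each tail factor gains a full factor of $h^{-1}$'' --- does not close. The number of admissible tuples $(a_i,q_i)$ is of order a power of $q$, and in the intended application $q$ is a product of all primes up to a power of $h$, so a single factor of $h^{-1}$ per tail term is nowhere near enough to beat the trivial count of terms; you would still need the cancellation coming from the constraint $\sum_i a_i/q_i\in\Z$ in those subsums too. The paper avoids this by never splitting $E_{f,h}$ at the level of the $k$-fold sum: the Fourier expansion \eqref{eq:Efh_fourier} is invoked only inside the two-index estimates of Lemmas \ref{lem:mv-lemma4-smooth} and \ref{lem:mv-lemma6-smooth}, and the $k$-fold sum is then handled by the unmodified Montgomery--Soundararajan argument with those lemmas as input.
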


%Equivalent of Lemma 3.2
\begin{lemma}\label{lem:V2-smooth}
Fix $h \ge 1$ and let $f_1,f_2:\R_{\ge 0} \to \C$ be smooth functions with compact supports $\mathrm{supp}(f_i) \subset (0,\infty)$ such that $|\hat{f_i}(\xi)| \ll O(|\xi|^{-2})$. Define $Q := \prod_{p \le h^2} p$, and define $V_2(q,h;f_1,f_2)$ via \eqref{eq:Vqhfi-def}. Then
\begin{equation}
V_2(Q,h;f_1,f_2) = (-\hat{f_1}(0)\hat{f_2}(0) + \{\mathcal M f\}(2))h^2 - \frac{\{\mathcal M f'\}(1)}{2}h \log h + O_{f_1,f_2}(h),
\end{equation}
where $\{\mathcal M f\}(s)$ is the Mellin transform of $f$, defined by
\begin{equation}\label{eq:mellin-transform-definition}
\{\mathcal M f\}(s) := \int_0^\infty x^{s-1} f(x) \dx.
\end{equation}
\end{lemma}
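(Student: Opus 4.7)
The plan is to mirror the arithmetic-progression analog Lemma \ref{lem:mvarb:V2comp}, adapted to the smooth setting. First, the constraint $a_1/q_1+a_2/q_2\in\Z$ with $(a_i,q_i)=1$ and $q_i>1$ forces $q_1=q_2=:d$ and $a_2\equiv-a_1\pmod d$. Expanding $E_{f_i,h}(\a)=\sum_m f_i(m/h)e(m\a)$ and using the Ramanujan-sum orthogonality $\sum_{(a,d)=1}e(an/d)=c_d(n)$ gives
\[V_2(Q,h;f_1,f_2)=\sum_{m_1,m_2\in\Z}f_1(m_1/h)f_2(m_2/h)\,g(m_1-m_2),\quad g(n):=\sum_{\substack{d\mid Q\\ d>1}}\frac{\mu(d)^2c_d(n)}{\phi(d)^2}.\]
A standard Euler-product computation yields $g(0)=Q/\phi(Q)-1$ and $g(n)=\mathfrak S_Q(\{0,n\})-1$ for $n\neq 0$, where $\mathfrak S_Q$ denotes the truncated singular series. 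Crucially, $g(n)=-1$ whenever $n$ is odd, since the factor $(1-1/(2-1)^2)=0$ at $p=2$ kills $\mathfrak S_Q$.

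Setting $\psi(x):=\int f_1(u+x)f_2(u)\,du$ (smooth with compact support and $\hat\psi(\xi)=\hat{f_1}(\xi)\hat{f_2}(-\xi)$), Poisson summation together with the decay $|\hat f_i(\xi)|\ll|\xi|^{-2}$ gives $\sum_m f_1((m+n)/h)f_2(m/h)=h\psi(n/h)+O_{f_1,f_2}(h^{-N})$ for any $N$. Thus, up to negligible error,
\[V_2 = h\sum_{n\in\Z}g(n)\,\psi(n/h).\]
I would split this sum by the parity of $n$. The diagonal $n=0$ contributes $h(Q/\phi(Q)-1)\psi(0)$. The odd-$n$ sum, using $g=-1$ and Poisson, reduces to $-\tfrac{h^2}{2}\hat\psi(0)+O_{f_1,f_2}(h^{-N})$. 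The even nonzero $n$ sum splits via $g(2m)=2C_2(h)w(m)-1$, where $w(m):=\prod_{p\mid m,\,p\ne 2}(p-1)/(p-2)$ and $C_2(h)$ is the truncated twin-prime product; the $-1$ piece gives another $-\tfrac{h^2}{2}\hat\psi(0)+h\psi(0)+O_{f_1,f_2}(h^{-N})$ by Poisson.

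The remaining main piece $2hC_2(h)\sum_{m\ne 0}w(m)\psi(2m/h)$ is evaluated by a Mellin/Perron contour argument. Writing $\tilde\psi(x):=(\psi(x)+\psi(-x))/2$ and using the Dirichlet-series identity $\sum_{m\ge 1}w(m)/m^s=\zeta(s)F(s)$ with $F(s):=\prod_{p\ge 3}(1+1/((p-2)p^s))$, and the further factorization $F(s)=\zeta(s+1)G(s)$ where $G$ is holomorphic with $G(0)=1/(2C_2)$, one has
\[\sum_{m\ge 1}w(m)\,\tilde\psi(2m/h) = \frac{1}{2\pi i}\int_{(c)}(h/2)^s\,\{\mathcal M\tilde\psi\}(s)\,\zeta(s)F(s)\,ds.\]
Shifting the contour past the simple pole at $s=1$ (coming from $\zeta$) yields an $h^2\hat\psi(0)$-sized residue via the identity $C_2F(1)=1$, and past the double pole at $s=0$ (where $F$ and $\{\mathcal M\tilde\psi\}$ each have simple poles) yields an $h\log h$-sized residue. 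Combining these with the boundary pieces and expanding $Q/\phi(Q)=2e^{\gamma_0}\log h+O(1)$ via Mertens' theorem produces the stated asymptotic after the $h^2$ cancellations.

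The main obstacle is the final Mellin-contour step: carrying out the double-pole residue at $s=0$ precisely, tracking the delicate $h^2$-sized cancellations among the four pieces using $C_2(h)/C_2=1+O(h^{-2})$, and matching the resulting residue coefficients to $\{\mathcal M f\}(2)$ and $\{\mathcal M f'\}(1)/2$ as stated.
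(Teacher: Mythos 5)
Your overall strategy coincides with the paper's: reduce $V_2$ via the Ramanujan-sum identity to $\sum_{m_1,m_2} f_1(m_1/h) f_2(m_2/h)\, g(m_1-m_2)$, pass to the cross-correlation $\psi$ of $f_1$ and $f_2$ by Poisson summation, and evaluate the resulting weighted sum of two-term singular series by a Mellin contour shift past the simple pole at $s=1$ and the double pole at $s=0$. Your parity splitting and the identities $\sum_m w(m)m^{-s} = \zeta(s)F(s)$, $F(s)=\zeta(s+1)G(s)$ are an unfolded version of the paper's Lemma \ref{lem:Sfh-smooth}, which works directly with $\sum_{n\ge1}\mathfrak S(\{0,n\})n^{-s} = \zeta(s)\zeta(s+1)\prod_p(\cdots)$; there is no real difference in method there.

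The gap is that you stop exactly where the lemma has content. Since the error term is $O(h)$, the lemma \emph{is} the identification of the $h^2$ and $h\log h$ coefficients, and you explicitly defer ``carrying out the double-pole residue at $s=0$'' and ``matching the resulting residue coefficients'' as an obstacle rather than doing it. Worse, your decomposition leaves a term that cannot be cancelled in the way you assert: the diagonal contributes $h\,\psi(0)\,(Q/\phi(Q)-1) = (2e^{\gamma_0}+o(1))\,\psi(0)\, h\log h$ by Mertens, while the $s=0$ residue of your contour integral contributes an $h\log h$ term whose coefficient is built from $\zeta(0)=-1/2$, $G(0)$, $C_2$ and $\tilde\psi(0)$ --- no factor of $e^{\gamma_0}$ ever appears --- so the two cannot cancel unless $\psi(0)=\int f_1 f_2 = 0$, and no $e^{\gamma_0}\psi(0)\,h\log h$ term is present in the stated asymptotic. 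You must either show this diagonal term is absent or track it explicitly. Note that in the paper's proof the diagonal collapses to $\sum_m f_1(m/h)f_2(-m/h)$, which vanishes identically because $\mathrm{supp}(f_i)\subset(0,\infty)$; so the first thing to settle is the sign in the argument of $f_2$ after summing over $a$, i.e.\ whether the relevant correlation is $\int f_1(u)f_2(u)\,\mathrm{d}u$ or $\int f_1(u)f_2(-u)\,\mathrm{d}u$ --- only the latter is compatible with the formula as stated. Until the diagonal is disposed of and the residues at $s=1$ and $s=0$ are actually computed and matched to $\{\mathcal M f\}(2)$ and $\{\mathcal M f'\}(1)/2$, the proof is not complete.
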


Using precisely the same techniques as in the case of arithmetic progressions, an analog to Theorem \ref{thm:mvarb:msthm2foraps} also holds for the smooth functions setting, where the input to each summand is clarified by Lemma \ref{lem:V2-smooth}.

\begin{theorem}\label{thm:msthm2-for-smooth}
Fix an integer $k$ and smooth functions $f_1,\dots,f_k:\R_{\ge 0} \to \C$ with compact support such that $\mathrm{supp}(f_i) \subset (0,\infty)$ and such that $|\hat{f_i}(\xi)| = O(|\xi|^{-2})$ for each $1 \le i \le k$. Define $R_k(h;f_1,\dots,f_k)$ via \eqref{eq:Rkh-smooth-def}. Then for $h \ge 3$,
\begin{align}
R_k(h;f_1,\dots, f_k) = &\sum_{0 \le j \le k/2} (-1)^j \subsum{\mathcal P = \{\mathcal S_1, \dots, \mathcal S_{k-j}\} \\ |\mathcal S_m| = 2 \:\forall 1\le m \le j \\ |\mathcal S_m| = 1 \: \forall j < m \le k-j} \left(h \subsum{d|Q \\ d>1} \frac{\mu(d)^2}{\phi(d)}\right)^j \nonumber\\
&\sum_{\sigma \in \mathcal B(j+1, \dots, k-j)} \prod_{(i_1,i_2) \in \sigma} V_2(Q,h; f_{\mathcal S_{i_1}}, f_{\mathcal S_{i_2}}), + O_{r,k}(h^{k/2-1/(7k) + \ep}),
\end{align}
where the sum is taken over partitions of $[1,k]$ where each part has either $1$ or $2$ elements, and for $|\mathcal S_m| = 1$, $f_{\mathcal S_m}$ denotes $f_j$ where $j \in \mathcal S_m$.
\end{theorem}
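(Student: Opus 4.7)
The plan is to mirror the proof of Theorem~\ref{thm:mvarb:msthm2foraps} step for step, with smooth weights $f_i(h_i/h)$ playing the role of the arithmetic-progression indicators $\mathbf 1[h_i \equiv c_i \bmod r]$ and with Theorem~\ref{thm:msthm1-for-smooth} and Lemma~\ref{lem:V2-smooth} supplying the main-term ingredients in place of Theorem~\ref{thm:mvarb:msthm1foraps} and Lemma~\ref{lem:mvarb:V2comp}. First I would open up $\mathfrak S_0$ via the standard Möbius/Ramanujan expansion, insert it into \eqref{eq:Rkh-smooth-def}, and interchange summations so that the inner $h_i$-sum becomes an $E_{f_i,h}$-type character sum against a smooth weight. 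Exactly as in the AP setting I would then truncate the expansion to $q_i \mid Q$ for $Q := \prod_{p \le h^2} p$; the tail is controlled via \eqref{eq:Efh_fourier} and the Fourier decay $|\hat f_i(\xi)| \ll \xi^{-2}$ and can be absorbed into the $O(h^{k/2-1/(7k)+\ep})$ error.

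Next I would resolve the distinctness constraint on $\{h_1,\dots,h_k\}$ by inclusion--exclusion over set partitions $\mathcal P$ of $[1,k]$, merging the $h_i$'s within each part into a single variable. Only partitions whose parts have size $1$ or $2$ can contribute to the main term: a part of size $s \ge 3$ collapses $s$ of the character sums into a single sum $\sum_m \prod_{i \in \mathcal S} f_i(m/h)\, e\bigl(m \sum_{i \in \mathcal S} a_i/q_i\bigr)$, which by Poisson summation is $O_f(h)$, and combining this with the $V_{k-s}$-type bound on the remaining indices yields a contribution strictly smaller than $h^{k/2}(\log h)^{k/2}$ that lands in the error. The signs $(-1)^j$ attached to partitions with $j$ size-$2$ parts arise from the Möbius function on the partition lattice.

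For each surviving partition with $j$ size-$2$ parts and $k-2j$ singletons I would separate the contribution of the paired variables from that of the singletons. Summing each pair $\mathcal S_m = \{i_1, i_2\}$ over the collapsed variable $m \in \Z$ against $f_{i_1}(m/h) f_{i_2}(m/h)$ and restricting the singular series to the diagonal $a_{i_1}/q_{i_1} + a_{i_2}/q_{i_2} \equiv 0 \pmod 1$ (the higher Poisson frequencies feed into the error via the decay of $\hat f_i$) yields the factor $h \sum_{d \mid Q,\, d > 1} \mu(d)^2/\phi(d)$ per pair. The surviving singleton indices then reassemble into a $V_{k-2j}(Q,h;\cdot)$, to which I would apply Theorem~\ref{thm:msthm1-for-smooth} to obtain $\sum_{\sigma \in \mathcal B(j+1,\dots,k-j)} \prod_{(i_1,i_2)\in\sigma} V_2(Q,h; f_{\mathcal S_{i_1}}, f_{\mathcal S_{i_2}})$ up to an admissible error. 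Combining with the $(-1)^j$ signs and the pair factors produces the claimed formula.

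The hardest step, as in the AP case, will be ensuring that the error from the $q_i$-truncation, from the inclusion--exclusion over higher-order partitions, and from the off-diagonal Poisson modes in the pair contributions all fit inside the $h^{k/2-1/(7k)+\ep}$ budget, while carrying along the $q/\phi(q)$ factors inherited from Theorem~\ref{thm:msthm1-for-smooth}. Here the hypothesis $|\hat f_i(\xi)| \ll \xi^{-2}$ plays the role that Montgomery--Vaughan's short-interval control of reduced residues played in the AP version, allowing the Poisson tails to be summed with the required power-saving; the rest of the bookkeeping is essentially identical to the AP argument.
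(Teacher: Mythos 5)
Your proposal matches the paper's approach exactly: the paper itself states that the proof of Theorem~\ref{thm:msthm2-for-smooth} is identical to that of Theorem~\ref{thm:mvarb:msthm2foraps} in Section~\ref{sec:mvarb:msthm2pf}, with Theorem~\ref{thm:msthm1-for-smooth} and Lemma~\ref{lem:V2-smooth} supplying the inputs in place of their arithmetic-progression counterparts, and your outline reproduces each step of that argument (truncation to $q_i \mid Q$, inclusion--exclusion over partitions via the $\delta$-products, discarding parts of size $\ge 3$, the $H_i(1)$ computation for the pairs, and reassembly into $V_{k-2j}$ followed by Theorem~\ref{thm:msthm1-for-smooth}). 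The one point worth flagging is that the exact analogue of $E_{r,c(\mathcal S_m)}(1)=h/r+O(1)$ for a doubleton $\mathcal S_m=\{i_1,i_2\}$ is $\sum_m f_{i_1}(m/h)f_{i_2}(m/h)=h\int_0^\infty f_{i_1}(x)f_{i_2}(x)\,\dx+O(1)$, so the per-pair factor should carry this integral weight, which both your sketch and the stated theorem suppress.
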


The proofs of Theorems \ref{thm:msthm1-for-smooth} and \ref{thm:msthm2-for-smooth} are identical to the proofs of Theorems \ref{thm:mvarb:msthm1foraps} and \ref{thm:mvarb:msthm2foraps}, so we omit them. However, Theorems \ref{thm:msthm1-for-smooth} and \ref{thm:mvarb:msthm1foraps}, which provide asymptotics for $V_k$ in the cases of smooth weighting and arithmetic progressions, rely on lemmas about sums of $E_{f,h}(\alpha)$. The results are fundamentally the same, but the flavor of several of these lemmas is somewhat different in the smooth case, so we record them in Section \ref{sec:smooth-functions}. We also provide the proof of Lemma \ref{lem:V2-smooth}, which is similar to the proof of Lemma \ref{lem:mvarb:V2comp}.

The organization of this paper is as follows. In Section \ref{sec:pfofmsthm1}, we prove Theorem \ref{thm:mvarb:msthm1foraps}. In Section \ref{sec:twoterms}, we compute certain sums of 2-term singular series which will then be used in the proof of Theorem \ref{thm:mvarb:msthm2foraps}. In Section \ref{sec:mvarb:msthm2pf}, we prove Theorem \ref{thm:mvarb:msthm2foraps}. Finally, in Section \ref{sec:smooth-functions} we discuss the smooth case.

\section{Proof of Theorem \ref{thm:mvarb:msthm1foraps}}\label{sec:pfofmsthm1}

The arguments in this section closely follow the arguments in the proof of Lemma 8 in \cite{MontgomeryVaughanReducedResidues} and Theorem 1 in \cite{MontgomerySoundararajanPrimesIntervals}. Here we outline the argument; where it differs, we present detailed explanations, but many steps are cited to \cite{MontgomeryVaughanReducedResidues} and \cite{MontgomerySoundararajanPrimesIntervals}.

The functions $E_{r,c_i}(\a)$, defined in \eqref{eq:Ercia}, are modular versions of sums $E(\a):= \sum_{m\le h} e(m\a)$. The sums $E(\a)$ are large if $\a$ is close to an integer and otherwise exhibit large amounts of cancellation. Since $E_{r,c_i}(\a)$ is summed only over $m$ in a set congruence class modulo $r$, it will also take large values when $\a$ is close to a multiple of $r$. For any $c,r$, write
\begin{align*}
E_{r,c}(\a) &= \subsum{m \le h} \frac 1r \subsum{n = 1}^r e\left(\frac{(m-c)n}{r}\right) e(m\a) \\
&= \frac 1r \subsum{n=1}^r  e(-cn/r) \sum_{m \le h} e(m\a + mn/r),
\end{align*}
so that
\begin{equation*}
|E_{r,c}(\a)|\le \frac 1r \sum_{n=1}^r \min\left\{h, \frac{1}{\|\a + n/r\|}\right\}. 
\end{equation*}
Define 
\begin{equation}\label{eq:mvarb:Fralphadef}
F_{r}(\a) := \frac 1r \sum_{n=1}^r \min\left\{h, \frac{1}{\|\a + n/r\|}\right\},
\end{equation} 
so that $|E_{r,c}(\a)| \le F_{r}(\a).$
We can then closely follow the analysis of Montgomery and Vaughan in \cite{MontgomeryVaughanReducedResidues} and Montgomery and Soundararajan in \cite{MontgomerySoundararajanPrimesIntervals}.
In particular, $F_r(\a)$ can be bounded, up to some dependence on $r$, by the same bounds as appear in Lemmas 4,5,6, and 7 of \cite{MontgomeryVaughanReducedResidues} and using the same arguments. We arrive at the following result.

\begin{theorem}\label{thm:mvarb:mvoddterms}
Fix $r,k,q \ge 1$. Let $\mathcal S(q)$ denote the set of $k$-tuples $(q_1, \dots, q_k)$ such that for all $i$, $q_i|q$, $q_i > 1$, and the $q_i$'s are not equal in pairs and otherwise distinct; that is to say, there is no reordering permutation $\sigma$ of the $k$ indices with $q_{\sigma(i)} = q_{\sigma(i+k/2)}$ for all $1 \le i \le k/2$, and no other equalities. Then
\begin{align*}
\subsum{(q_1, \dots, q_k) \in \mathcal S(q)} \prod_{i=1}^k \frac{\mu(q_i)^2}{\phi(q_i)} \subsum{a_1, \dots, a_k \\ 1\le a_i \le q_i  \\ (a_i,q_i) = 1 \\ \sum_i a_i/q_i \in \Z} \prod_{i=1}^k F_{r}\left(\frac{a_i}{q_i}\right) \ll_r q h^{k/2-1/7k} \left(\frac{\phi(q)}{q}\right)^{k/2-2^k}.
\end{align*}
\end{theorem}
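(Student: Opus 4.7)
The plan is to follow the strategy of Lemma 8 in \cite{MontgomeryVaughanReducedResidues} and its adaptation in Theorem 1 of \cite{MontgomerySoundararajanPrimesIntervals}, treating $F_r$ as a controlled perturbation of the standard function $\min\{h, 1/\|\alpha\|\}$. First, I would unfold $F_r$ via its definition \eqref{eq:mvarb:Fralphadef}: writing
\[
\prod_{i=1}^k F_r\!\left(\tfrac{a_i}{q_i}\right) = \frac{1}{r^k}\sum_{n_1,\dots,n_k=1}^r \prod_{i=1}^k \min\!\left\{h,\, \frac{1}{\|a_i/q_i + n_i/r\|}\right\},
\]
and using $(q,r)=1$ (so $(q_i,r)=1$ for every $i$) to write $a_i/q_i + n_i/r \equiv b_i/(q_ir) \pmod 1$ for a unique $b_i$ modulo $q_ir$, reduces the sum under consideration to a sum of the same shape but with denominators dividing $qr$, at the cost of an extra factor of $r^{-k}$ and $r^k$ new summation variables.

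The second step is to reindex: each rational $b_i/(q_i r)$ reduces to a rational $a_i'/q_i'$ in lowest terms with $q_i' \mid qr$, and the number of pairs $(b_i, q_i)$ giving a fixed $(a_i', q_i')$ is controlled by the multiplicative divisor structure. Because $(q,r)=1$, any divisor of $qr$ factors uniquely as a divisor of $q$ times a divisor of $r$, so all contributions depending only on $r$ can be absorbed into the $\ll_r$ constant. This rewrites the original sum, up to $r$-dependent constants, as a sum over $k$-tuples $(q_1', \dots, q_k')$ with $q_i' \mid qr$ and built from standard Montgomery--Vaughan summands.

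The third step is to apply the analogs of Lemmas 4--7 of \cite{MontgomeryVaughanReducedResidues} to the reindexed sum. The combinatorial hypothesis defining $\mathcal S(q)$---that the $q_i$'s are not equal in pairs and otherwise distinct---is exactly what forces at least one ``unpaired'' index, so that when grouping the $k$ factors into pairs plus a remainder, at least one factor must be bounded by the single-variable estimate (Lemma 4) rather than by the paired estimate (Lemma 5). This unpaired factor contributes $h^{1-1/(7k)}$ rather than the diagonal size $h$, while the paired factors each contribute essentially $h$ times a sum of $\mu(d)^2/\phi(d)$ over divisors $d$ of $q$, producing a factor of $(\phi(q)/q)^{k/2-2^k}$ after the standard bookkeeping from \cite{MontgomerySoundararajanPrimesIntervals}. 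Multiplying the single-variable saving against the $(k-1)$-fold paired contribution yields the claimed $h^{k/2-1/(7k)}(\phi(q)/q)^{k/2-2^k}$ bound.

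The main obstacle will be the reindexing in the second step: because the reduction $b_i/(q_ir) \to a_i'/q_i'$ can merge, split, or rearrange equalities among the $q_i'$, one must verify that the non-pairing hypothesis on the original $(q_1, \dots, q_k) \in \mathcal S(q)$ translates into an analogous structural constraint on the reindexed tuples, so that MV's Lemma 8 continues to apply after the change of variables. This requires decomposing the sum according to the possible patterns of shared factors between the $b_i$ and $q_ir$, of which there are only $O_k(1)$ many, and checking that in each pattern the unpaired index is preserved. Once this combinatorial check is complete, the remaining calculations parallel those in \cite{MontgomeryVaughanReducedResidues, MontgomerySoundararajanPrimesIntervals} essentially verbatim.
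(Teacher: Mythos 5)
Your step 1 (unfolding $F_r$ as an average of $r$ translates of the Montgomery--Vaughan majorant $\min\{h,\|\cdot\|^{-1}\}$) is the right starting point, and step 3 is a fair gloss of Lemma 8 of \cite{MontgomeryVaughanReducedResidues}. The gap is in step 2: the reindexing $a_i/q_i + n_i/r \equiv a_i'/q_i' \pmod 1$ does not preserve the combinatorial hypothesis defining $\mathcal S(q)$, and the ``combinatorial check'' you defer to cannot succeed. Since $(q_i,r)=1$, one computes $q_i' = q_i\cdot r/(n_i,r)$, so for $k=4$ the tuple $(q_1,\dots,q_4)=(2,2,2,2)$ --- which lies in $\mathcal S(q)$, because all four entries being equal means no pairing ``with no other equalities'' exists --- reindexes, for $(n_1,n_2,n_3,n_4)=(1,1,r,r)$ say, to $(2s,2s,2,2)$ with $s=r/(1,r)\cdot 1>1$, wait, with $s=r/(n_1,r)>1$; this \emph{is} equal in pairs and otherwise distinct. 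So the reindexed sum is not supported off the diagonal, and you can neither quote Lemma 8's off-diagonal bound for it nor absorb such terms into the diagonal (the diagonal contributes $\asymp h^{k/2}$, exceeding the target $h^{k/2-1/(7k)}$). Two further frictions you do not address: after reindexing, the constraint $\sum_i a_i/q_i\in\Z$ becomes the shifted congruence $\sum_i a_i'/q_i' \equiv \sum_i n_i/r \pmod 1$, which is not the form appearing in Lemma 8; and the weights $\mu(q_i)^2/\phi(q_i)$ depend only on the $q$-part of $q_i'$, so summing freely over $q_i'\mid qr$ does not reproduce the $\phi(q)/q$ bookkeeping.

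The repair is to not reindex at all, which is the paper's route: one verifies that $F_r$ itself satisfies the analogs of Lemmas 4--7 of \cite{MontgomeryVaughanReducedResidues} with $r$-dependent constants --- essentially immediate from your own unfolding, since $F_r$ is an average of $r$ translates of the standard majorant and those lemmas are already stated for translated arguments $E(a/q+\beta)$ --- and then runs the proof of Lemma 8, in the form of Theorem 1 of \cite{MontgomerySoundararajanPrimesIntervals}, verbatim with $F_r$ in place of $E$. The sum over $(q_1,\dots,q_k)\in\mathcal S(q)$ and the constraint $\sum_i a_i/q_i\in\Z$ are then untouched, so the pairing combinatorics that produces the $h^{-1/(7k)}$ saving applies directly. (Your insertion of the hypothesis $(q,r)=1$ is correct and in fact necessary: with $r$ even, $2\mid q$, and $a_i=1$, the tuple $(2,2,2,2)\in\mathcal S(q)$ contributes $F_r(1/2)^4\ge (h/r)^4$, so the stated bound requires $(q_i,r)=1$ as in the definition of $V_k$.)
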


Theorem \ref{thm:mvarb:mvoddterms} accounts for all terms in $V_k(q,h;r,c_1,\dots, c_k)$ except for those terms where the $q_i$'s are equal in pairs and otherwise distinct. When $k$ is odd, there are no such terms, so assume that $k$ is even. By the same argument as in \cite{MontgomerySoundararajanPrimesIntervals}, we can drop the assumption that the pairs are distinct, so that the terms left to be estimated are given by
\begin{equation*}
\subsum{\sigma \in \mathcal B_k} \subsum{(q_i)_{(i,j) \in \mathcal B_k} \\ 1 < q_i|q} \prod_{(i,j) \in \mathcal B_k} \frac{\mu(q_i)^2}{\phi(q_i)^2} \subsum{(b_i)_{(i,j) \in \mathcal B_k}\\1 \le b_i \le q_i \\ \sum_i b_i/q_i \in \Z} \prod_{(i,j) \in \mathcal B_k} J_{r,c_i,c_j}(b_i,q_i),
\end{equation*}
where for each $i$,
\begin{equation*}
J_{r,c_i,c_j}(b_i,q_i) := \subsum{1 \le a_i \le q_i \\ (a_i,q_i) = 1 \\ (b_i-a_i,q_i) = 1} E_{r,c_i}\left(\frac{a_i}{q_i}\right)E_{r,c_j}\left(\frac{b_i-a_i}{q_i}\right).
\end{equation*}

Each term $\sigma$ in our sum is identical up to changing labels, so without loss of generality we will work with the term $\sigma = \{(i,k/2 + i) : 1 \le i \le k/2\}$, so that $q_i = q_{i + k/2}$ for all $1 \le i \le k/2$. This term is given by
\begin{equation}\label{eq:mvarb:msthm1:fixedsigmasum}
\subsum{q_1, \dots, q_{k/2} \\ 1 < q_i|q} \prod_{i=1}^{k/2} \frac{\mu(q_i)^2}{\phi(q_i)^2} \subsum{b_1, \dots, b_{k/2} \\ 1 \le b_i \le q_i \\ \sum_i b_i/q_i \in \Z} \prod_{i = 1}^{k/2} J_{r,c_i,c_{k/2+i}}(b_i,q_i).
\end{equation}
Let $\mathcal J \subset [1,k/2]$ be the subset of $i$'s with $0 < b_i < q_i$ instead of $b_i = q_i$. For $i \not\in \mathcal J$, we have
\begin{equation*}
J_{i,k/2+i}(q_i,q_i) = \subsum{1 \le a_i \le q_i \\ (a_i,q_i) = 1} E_{r,c_i}\left(\frac{a_i}{q_i}\right) E_{r,c_{k/2 +i}} \left(-\frac{a_i}{q_i}\right),
\end{equation*}
which, when summed over $1 < q_i|q$ with the weight $\frac{\mu(q_i)^2}{\phi(q_i)^2}$, is equal to $V_2(q,h; r,c_i,c_{k/2+i})$. Thus, \eqref{eq:mvarb:msthm1:fixedsigmasum} is equal to
\begin{equation}\label{eq:mvarb:msthm1:VsandWs}
\subsum{\mathcal J \subset [1,k/2]} \prod_{i \not\in\mathcal J} V_2(q,h;r,c_i,c_{k/2+i}) W_{\mathcal J}(q,h;r,(b_i)_{i\in\mathcal J}),
\end{equation}
where 
\begin{equation*}
W_{\mathcal J}(q,h;r,(b_i)_{i\in\mathcal J}) := \subsum{(q_i)_{i \in \mathcal J} \\ 1 < q_i|q} \prod_{i \in \mathcal J} \frac{\mu(q_i)^2}{\phi(q_i)^2} \subsum{(b_i)_{i \in\mathcal J} \\ 0 < b_i < q_i \\ \sum_{i \in \mathcal J} b_i/q_i \in \Z} \prod_{i \in \mathcal J} J_{c_i,c_{k/2+i}}(b_i,q_i).
\end{equation*}

The term $\mathcal J = \emptyset$ gives the desired main term, so it remains to show that the terms with $\mathcal J \ne \emptyset$ are smaller. By following the reasoning on page 11 of \cite{MontgomerySoundararajanPrimesIntervals}, we get that
\begin{equation*}
W_{\mathcal J}(q,h;r,(b_i)_{i\in\mathcal J}) \ll_r h^{|\mathcal J|/2} \left(\frac q{\phi(q)}\right)^{2^{|\mathcal J|}},
\end{equation*}
and that for any $c_1,c_2 \mod r$,
\begin{equation*}
V_2(q,h;r,c_1,c_2) \ll_r h\frac{q}{\phi(q)}.
\end{equation*}
Applying these estimates to \eqref{eq:mvarb:msthm1:VsandWs} completes the proof of Theorem \ref{thm:mvarb:msthm1foraps}.

\section{Auxiliary lemmas: two-term computations}\label{sec:twoterms}

In order to prove Theorem \ref{thm:mvarb:msthm2foraps}, we will ultimately invoke Theorem \ref{thm:mvarb:msthm1foraps}, which will then relate quantities involving $k$-term singular series to quantities involving $2$-term singular series. In this section, we state two lemmas which compute, respectively, sums of two-term singular series in arithmetic progressions, and the quantity $V_2(Q,h;r,c_1,c_2)$ when $Q$ is the product of all primes below $h^2$, which will be applied in Section \ref{sec:mvarb:msthm2pf}.

The following lemma is a computation of sums of two-term singular series to the modulus $q$. Its proof is nearly identical to the proof of Proposition 2.1 in \cite{lemkeoliversound}, so we omit it. Similar quantities were previously studied in \cite{MR1073669} and \cite{MR1954710}.
\begin{lemma}\label{lem:mvarb:twotermSrcomp}
Fix $r, h \ge 1$, and let $v \mod r$ be any residue class. Define
\begin{equation*}
S(r,v,h) := \subsum{1 \le m \le h \\ m \equiv v \mod r} (h-m) \mathfrak S_r(\{0,m\}).
\end{equation*}
Then when $v = 0$,
\begin{equation*}
S(r,v,h) = \frac{h^2}{2r} - \frac{h}{2} \phi(r) \left(\log \frac hr + \log 2\pi + \g_0 + \sum_{p|r} \frac{\log p}{p-1}\right) + O_r(h^{1/2 + \ep}),
\end{equation*}
where $\g_0$ denotes the Euler--Mascheroni constant. Meanwhile, if $v \ne 0$, then if $d = (v,r)$, 
\begin{align*}
S(r,v,h) = \frac{h^2}{2r} &- \frac h2 \frac{\phi(r)}{r} \frac 1{\phi(r/d)} \Lambda(r/d) \nonumber \\
&+ \frac h{\phi(r/d)}\sum_{\chi \ne \chi_0 \mod{r/d}} \bar{\chi}(v/d) L(0,\chi) L(1,\chi) A_{r,\chi} + O_r(h^{1/2 + \ep}),
\end{align*}
with
\begin{equation}\label{eq:mvarb:defArchi}
A_{r,\chi} = \prod_{p|r} \left(1-\frac{\chi(p)}{p}\right) \prod_{p\nmid r} \left(1 - \frac{(1-\chi(p))^2}{(p-1)^2}\right).
\end{equation}
\end{lemma}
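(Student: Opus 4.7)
The plan is to follow the Dirichlet series approach used in the proof of Proposition 2.1 of \cite{lemkeoliversound}, adapted to arbitrary $r$ and $v$. First, I would expand the singular series via the convergent Ramanujan-sum identity
$$\mathfrak{S}_r(\{0,m\}) = \sum_{\substack{q \ge 1 \\ (q,r)=1}} \frac{\mu(q)^2}{\phi(q)^2} c_q(m),$$
which is verified at each local Euler factor for $p \nmid r$. Substituting this and interchanging the orders of summation reduces $S(r,v,h)$ to evaluating an inner sum of $(h-m) c_q(m)$ over $m \le h$ with $m \equiv v \mod r$.

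When $v = 0$, the substitution $m = rn$ (legitimate since $(q,r)=1$ gives $c_q(rn) = c_q(n)$) together with the divisor expansion $c_q(n) = \sum_{d \mid (q,n)} d\,\mu(q/d)$ reduces the inner sum to elementary sums $\sum_{k \le h/(rd)}(h - rdk)$ for each $d \mid q$. Using the identities $\sum_{d \mid q}\mu(q/d) = \mathbf{1}_{q=1}$ and $\sum_{d \mid q} d\,\mu(q/d) = \phi(q)$, the $q=1$ term produces the main term $h^2/(2r)$, while the $q > 1$ contribution is a weighted sum that, after truncation at some $Q$ of polynomial size in $h$, is controlled by a Mertens-type asymptotic of shape
$$\sum_{\substack{q \le Q \\ (q,r)=1}} \frac{\mu(q)^2}{\phi(q)} = (\text{leading coefficient}) \cdot \log Q + C_r + O_r(Q^{-1/2+\ep}),$$
derived from the Laurent expansion at $s=0$ of the Dirichlet series $\sum_{(q,r)=1}\mu(q)^2/\phi(q)\,q^{-s}$, which factors as $\zeta(s+1)$ times an Euler product holomorphic for $\Re s > -c$. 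The constant $C_r$ unpacks into the claimed combination of $\g_0$, $\log 2\pi$, and $\sum_{p|r}\log p/(p-1)$ after careful evaluation of the local factors at $p \mid r$.

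For $v \ne 0$, set $d = (v,r)$ and write $m = dm'$, $v = dv'$ with $(v', r/d)=1$, so the congruence becomes $m' \equiv v' \mod r/d$. Detecting this via Dirichlet characters $\chi \mod r/d$ (with $(m', r/d)=1$ automatic), the principal character contributes the density term $h^2/(2r)$ together with the explicit $\Lambda(r/d)/\phi(r/d)$ correction, which vanishes unless $r/d$ is a prime power and arises from the ``missing'' local Euler factor at the prime of $r/d$. For each non-principal $\chi \mod r/d$, the sum over $q$ reorganizes through Euler products to yield a Dirichlet series built from $L(s,\chi)$ and $L(s-1,\chi)$ twisted by a local correction $A_{r,\chi}(s)$, absolutely convergent for $\Re s > 0$ since each local factor differs from $1$ by $O(p^{-2})$. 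Applying Perron's formula with the weight $h^{s+1}/(s(s+1))$ coming from the $(h-m)$ smoothing and shifting the contour to $\Re s = \tfrac12 + \ep$, the residues (no pole is present from the $L$-functions themselves, since $\chi$ is non-principal) yield the $L(0,\chi)L(1,\chi)\,A_{r,\chi}$ main term with $A_{r,\chi}$ as in \eqref{eq:mvarb:defArchi}, while standard convexity bounds on $L(s,\chi)$ produce the error $O_r(h^{1/2+\ep})$.

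The main obstacle is the careful bookkeeping of constants: producing exactly $\g_0 + \log 2\pi + \sum_{p|r}\log p/(p-1)$ in the $v=0$ case from the Laurent expansion of the relevant Dirichlet series (with $\log 2\pi$ entering through the values of $\zeta$ at small integers), extracting the precise $\Lambda(r/d)/\phi(r/d)$ correction from the principal character in the $v\ne 0$ case, and matching local factors at each prime to identify $A_{r,\chi}(1)$ with the Euler product in \eqref{eq:mvarb:defArchi}. Since this bookkeeping is done in full in \cite{lemkeoliversound} for the case $r=q$ of two residue classes, the present lemma is essentially a repackaging of that computation, and the explicit calculations may be imported with minimal modification.
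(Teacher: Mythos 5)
Your proposal matches the paper's approach: the paper omits this proof entirely, stating that it is nearly identical to Proposition 2.1 of \cite{lemkeoliversound}, and your Ramanujan-sum expansion, character detection modulo $r/d$, and Perron/contour-shift argument is precisely that computation adapted to general $r$ and $v$. One small correction: to pick up the residue at $s=0$ producing $h\,L(0,\chi)L(1,\chi)A_{r,\chi}$ and obtain the error $O_r(h^{1/2+\ep})$, the contour must be shifted to $\mathrm{Re}(s)=-\tfrac12+\ep$ (past the pole at $s=0$ of the Perron kernel), not to $\mathrm{Re}(s)=\tfrac12+\ep$.
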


\begin{lemma}\label{lem:mvarb:V2comp}
Fix integers $r,h \ge 1$ and two congruence classes $c_1, c_2 \mod r$. Define 
\begin{equation*}
Q = \prod_{\substack{p \le h^2 \\ p \nmid r}} p,
\end{equation*}
so that $Q$ is the product of all primes below $h^2$ that do not divide $r$. Let $V_2(Q,h; r,c_1,c_2)$ be defined as in \eqref{eq:mvarb:vkdefwmod}. Then if $c_1 \equiv c_2 \mod r$,
\begin{equation*}
V_2(Q,h; r,c_1,c_1) = \frac hr \subsum{d|Q\\d>1} \frac{\mu(d)^2}{\phi(d)} - h \frac{\phi(r)}{r} \log h  + C_0(r)h + O_r(h^{1/2 + \ep}),
\end{equation*}
where 
\begin{equation}\label{eq:mvarb:C0rdef}
C_0(r) = \frac{\phi(r)}{r} \left(\log \frac{r}{2\pi} -\g_0 - \sum_{p|r}\frac{\log p}{p-1}\right),
\end{equation}
and $\g_0$ is the Euler--Mascheroni constant. If $c_1 \not \equiv c_2 \mod r$, define $d = (c_1-c_2,r)$, with $d < r$. Then 
\begin{align}
V_2(Q,h; r,c_1,c_2) = -&h \frac{\phi(r)}{r^2\phi(r/d)} \Lambda(r/d) \nonumber \\
&+ \frac{2h}{r\phi(r/d)} \sum_{\chi \ne \chi_0 \mod{r/d}} \bar{\chi}\left(\frac{c_1-c_2}{d}\right) L(0,\chi) L(1,\chi) A_{r,\chi} + O_r(h^{1/2 + \ep}),
\end{align}
where $A_{r,\chi}$ is defined in \eqref{eq:mvarb:defArchi}.
\end{lemma}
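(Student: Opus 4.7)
The plan is to unfold $V_2(Q, h; r, c_1, c_2)$ via Ramanujan summation, rewrite it as a sum over pairs in arithmetic progressions of a truncated singular series, and reduce to the sums evaluated in Lemma \ref{lem:mvarb:twotermSrcomp}. The constraint $a_1/q_1 + a_2/q_2 \in \Z$ with $(a_i,q_i)=1$ forces $q_1=q_2=q$ and $a_2\equiv -a_1 \pmod q$. Expanding the $E_{r,c_i}$'s and collapsing the $a$-sum via $\sum_{(a,q)=1} e((m_1-m_2)a/q) = c_q(m_1-m_2)$ gives
\[
V_2(Q,h;r,c_1,c_2) = \subsum{m_1, m_2 \le h \\ m_i \equiv c_i \mod r} F(m_1 - m_2), \qquad F(n) := \subsum{q > 1, q | Q} \frac{\mu(q)^2 c_q(n)}{\phi(q)^2}.
\]
By multiplicativity of $\mu^2 c_q/\phi^2$ on squarefree integers, $\sum_{q | Q} \mu(q)^2 c_q(n)/\phi(q)^2 = \prod_{p|Q}(1 + c_p(n)/(p-1)^2)$ is a truncation to $p \le h^2$ of the Euler product for $\mathfrak{S}_r(\{0,n\})$; for $0 < |n| < h$, primes $p > h^2$ do not divide $n$, so this truncation equals $\mathfrak{S}_r(\{0,n\})(1 + O(1/h^2))$. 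Hence $F(n) = \mathfrak{S}_r(\{0,n\}) - 1 + O(\mathfrak{S}_r(\{0,n\})/h^2)$ for $n \ne 0$, while $F(0) = \sum_{d|Q, d > 1} \mu(d)^2/\phi(d)$.

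Next I would separate diagonal from off-diagonal. The diagonal $m_1 = m_2$ is empty unless $c_1 \equiv c_2 \pmod r$, in which case it yields $(h/r)\sum_{d|Q, d > 1} \mu(d)^2/\phi(d) + O_r(\log h)$, matching the first term in the $c_1 \equiv c_2$ case. For the off-diagonal, set $n = m_1 - m_2$ and compute the pair count
\[
R(n) = \#\{(m_1, m_2) : m_i \le h,\ m_i \equiv c_i \mod r,\ m_1 - m_2 = n\}.
\]
Using that $\{(h - n - c_2)/r\}$ is constant when $n$ is restricted to a fixed residue class mod $r$, one obtains $R(n) = (h - |n|)/r + \delta_{\mathrm{sgn}(n)}$ with explicit constants $\delta_\pm$ depending only on $h, r, c_1, c_2$ and the sign of $n$. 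Splitting the $n$-sum by sign and invoking $\mathfrak{S}_r(\{0,n\}) = \mathfrak{S}_r(\{0,|n|\})$, the off-diagonal sum becomes
\[
\frac{1}{r}\bigl[S(r,v_0,h) + S(r,-v_0,h)\bigr] - \frac{1}{r}\bigl[\Sigma^+ + \Sigma^-\bigr] + \delta_+ T^+ + \delta_- T^- + O_r(h^{1/2+\ep}),
\]
where $v_0 = c_1-c_2$, $\Sigma^\pm = \sum_{m \equiv \pm v_0, m \le h}(h-m)$ are arithmetic sums, and $T^\pm = \sum_{m \equiv \pm v_0, m \le h} F(m)$.

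Finally I would apply Lemma \ref{lem:mvarb:twotermSrcomp} to each $S$-summand and combine. In the case $c_1 \equiv c_2 \pmod r$ (where $v_0 \equiv 0$), the $v=0$ case of the lemma directly produces the $-h\phi(r)/r \cdot \log h + C_0(r) h$ contribution, with the $\delta_\pm$ corrections absorbed into the error. In the case $c_1 \not\equiv c_2$, the leading $h^2/r$ contributions from $\frac{1}{r}(S + S)$ and from $\frac{1}{r}(\Sigma^+ + \Sigma^-)$ cancel, the $\Lambda(r/d)$-term from $S$ yields $-h\phi(r)/(r^2\phi(r/d))\Lambda(r/d)$, and the character sum $\frac{2h}{r\phi(r/d)} \sum_\chi \bar\chi((c_1-c_2)/d) L(0,\chi) L(1,\chi) A_{r,\chi}$ emerges from combining the $S$-symmetrization with the $\delta_\pm T^\pm$ contributions (whose sub-leading structure mirrors that in Lemma \ref{lem:mvarb:twotermSrcomp}). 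The principal obstacle is the delicate bookkeeping of the constant offsets $\delta_\pm$ together with the secondary asymptotics of $T^\pm$: one must track these carefully enough to preserve the character-sum piece while cancelling all spurious $h^2$- and $h \log h$-order terms, and to show the remaining error stays $O_r(h^{1/2 + \ep})$.
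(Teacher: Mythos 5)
Your proposal follows essentially the same route as the paper: unfold $V_2$ into $\sum_{m_1,m_2}\sum_{1<q\mid Q}\mu(q)^2\mathbf c_q(m_1-m_2)/\phi(q)^2$, replace the truncated Euler product by $\mathfrak S_r(\{0,n\})$ up to $O(h^{-2})$, split off the diagonal, and reduce the off-diagonal to the weighted sums $S(r,\pm(c_1-c_2),h)$ evaluated by Lemma \ref{lem:mvarb:twotermSrcomp} — the paper simply writes the pair count as $(h-|n|)/r$ and suppresses the offsets $\delta_\pm$ that you track explicitly. The one adjustment to your sketch is that the $\delta_\pm T^\pm$ terms do not feed into the character sum: since $T^\pm=\sum_{m\le h,\ m\equiv\pm v_0}\bigl(\mathfrak S_r(\{0,m\})-1\bigr)=O_r(h^{1/2+\ep})$ (by differencing the asymptotic of Lemma \ref{lem:mvarb:twotermSrcomp} in $h$), they are absorbed entirely into the error term, and the character sum arises solely from symmetrizing $S(r,v_0,h)+S(r,-v_0,h)$.
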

\begin{proof}
Begin by expanding
\begin{align*}
V_2(Q,h; r,c_1,c_2) &= -E_{r,c_1}(1)E_{r,c_2}(1) + \sum_{q|Q} \frac{\mu(q)^2}{\phi(q)^2} \subsum{1 \le a \le q \\ (a,q) = 1} E_{r,c_1}\left(\frac aq\right)E_{r,c_2}\left(-\frac aq\right) \\
&= - \frac{h^2}{r^2} + \sum_{q|Q} \frac{\mu(q)^2}{\phi(q)^2}\subsum{1 \le a \le q \\ (a,q) = 1} \subsum{m_1,m_2 \le h \\ m_i \equiv c_i \mod r} e\left((m_1-m_2)\frac aq\right).
\end{align*}

Assume first that $c_1 \equiv c_2 \mod r$. Then
\begin{equation}\label{eq:V2firstexpansion}
V_2(Q,h; r,c_1,c_1) = -\frac{h^2}{r^2} + \sum_{q|Q} \frac{\mu(q)^2}{\phi(q)^2} \subsum{1 \le a \le q \\ (a,q) = 1} \subsum{|m| \le h \\ r|m} \frac 1r (h-|m|) e\left(m\frac aq\right).
\end{equation}
Let $\mathbf c_q(m)$ denote the Ramanujan sum $\subsum{1 \le a \le q \\ (a,q) = 1} e\left(m \frac aq \right)$. The expression \eqref{eq:V2firstexpansion} is then
\begin{align*}
V_2(Q,h; r,c_1,c_1) &= -\frac{h^2}{r^2} + \frac 1r \sum_{q|Q} \frac{\mu(q)^2}{\phi(q)^2}\left(h \mathbf c_q(0) + 2 \sum_{1 \le m \le h/r} (h-rm) \mathbf c_q(rm)\right) \\
&= -\frac{h^2}{r^2} + \frac hr \sum_{q|Q} \frac{\mu(q)^2}{\phi(q)^2} + \frac 2r \sum_{1 \le m \le h/r} (h-rm) \sum_{q|Q} \frac{\mu(q)^2}{\phi(q)^2} \mathbf c_q(rm).
\end{align*}
The inside sum over $q|Q$ is multiplicative and, since $Q$ is the product of primes $p\le h^2$ not dividing $r$, it is given by
\begin{equation*}
\sum_{q|Q} \frac{\mu(q)^2}{\phi(q)^2} \mathbf c_q(rm) = \prod_{\substack{p|m \\ p \nmid r}} \left(1+\frac 1{p-1}\right) \prod_{\substack{p \nmid m \\ p \nmid r}} \left(1-\frac 1{(p-1)^2}\right) + O_r(h^{-2}) = \mathfrak S_r(\{0,m\}) + O_r(h^{-2}),
\end{equation*}
by the definition of $\mathfrak S_r$ in \eqref{eq:bg:singseriesawayq}.

Then
\begin{equation*}
V_2(Q,h; r,c_1,c_1) = -\frac{h^2}{r^2} + \frac hr \sum_{q|Q} \frac{\mu(q)^2}{\phi(q)^2} + \frac 2r \sum_{1 \le m \le h/r} (h-rm) \mathfrak S_r(\{0,m\}) + O_r(1),
\end{equation*}
which by Lemma \ref{lem:mvarb:twotermSrcomp} is equal to
\begin{equation*}
-\frac{h^2}{r^2} + \frac hr \sum_{q|Q} \frac{\mu(q)^2}{\phi(q)^2} + \frac{h^2}{r^2} - h \frac{\phi(r)}{r} \left(\log \frac hr + \log 2\pi + \g_0 + \sum_{p|r} \frac{\log p}{p-1}\right) + O_r(h^{1/2 + \ep}).
\end{equation*}
After a little rearranging this gives the desired result.

Now assume $c_1 \not\equiv c_2 \mod r$. In this case,
\begin{align*}
V_2(Q,h;r,c_1,c_2) &= -\frac{h^2}{r^2} + \sum_{q|Q} \frac{\mu(q)^2}{\phi(q)^2} \subsum{1 \le a \le q \\ (a,q) = 1} \subsum{|m| \le h \\ m \equiv c_1 - c_2 \mod r} \frac 1r (h-|m|) e\left(m\frac aq\right) \\
&= -\frac{h^2}{r^2} + \subsum{|m| \le h \\ m \equiv c_1 - c_2 \mod r} \frac 1r (h-|m|) \sum_{q|Q} \frac{\mu(q)^2}{\phi(q)^2} \mathbf c_q(m) \\
&= -\frac{h^2}{r^2} + \subsum{|m| \le h \\ m \equiv c_1 - c_2 \mod r} \frac 1r (h-|m|) \mathfrak S_r (\{0,m\}) + O_r(1),
\end{align*}
where if $m \equiv c_1 - c_2 \mod r$, then $m \ne 0$. 

Applying Lemma \ref{lem:mvarb:twotermSrcomp} completes the proof.
\end{proof}

\section{Proof of Theorem \ref{thm:mvarb:msthm2foraps}}\label{sec:mvarb:msthm2pf}

Throughout this section, fix $r, k,h \ge 1$ and set $Q = \prod_{\substack{p \le y \\ p \nmid r}} p$, where $y = h^{k+1}$. 

Begin with the expansion
\begin{equation}\label{eq:mvarb:rkstartingpoint}
R_k(h;r,c_1, \dots, c_k) = \subsum{q_1, \dots, q_k \\ 1 < q_i|Q} \prod_{i=1}^k \frac{\mu(q_i)}{\phi(q_i)} \subsum{a_1, \dots, a_k \\ 1 \le a_i \le q_i \\ (a_i,q_i) = 1 \\ \sum_i a_i/q_i \in \Z} \subsum{d_1, \dots, d_k \\ 1 \le d_i \le h \\ \text{distinct} \\ d_i \equiv c_i \mod r}  e\left(\sum_{i=1}^k \frac{a_id_i}{q_i}\right) + O_r(1),
\end{equation}
where the error term is due to our choice of $Q$. The expression on the right-hand side of \eqref{eq:mvarb:rkstartingpoint} is very close to $V_k(Q,h;r,c_1, \dots, c_k)$, but in order to apply Theorem \ref{thm:mvarb:msthm1foraps}, we will need to remove the distinctness condition on the $d_i$'s. As in the proof of Theorem 2 from \cite{MontgomerySoundararajanPrimesIntervals}, removing this condition will be the bulk of our work.

This distinctness condition is heavily dependent on the congruence classes $c_1, \dots, c_k$; in particular, if $c_i \not\equiv c_j \mod r$, then $d_i$ and $d_j$ never coincide and the distinctness condition is immaterial. Our arguments follow those of \cite{MontgomerySoundararajanPrimesIntervals} closely, but with additional bookkeeping in order to account for the congruence classes $c_1, \dots, c_k \mod r$. 

For a given tuple $(d_1, \dots, d_k)$ with $1 \le d_i \le h$ and $d_i \equiv c_i \mod r$ for all $i$, put $\d_{ij} = 1$ if $d_i = d_j$ and $\d_{ij} = 0$ otherwise. Then
\begin{equation*}
\prod_{\ell = 1}^r \prod_{\substack{i,j \in \mathcal C_{\ell} \\ i < j}} (1-\d_{ij}) = \begin{cases} 1 &\text{ if the } d_i\text{ are distinct } \\ 0 &\text{ otherwise.}\end{cases}
\end{equation*}
When the left-hand side above is expanded, it is a linear combination of products of the $\d$ symbols. Let $\Delta$ denote one such product, and let $|\Delta|$ denote the number of $\d_{ij}$ in the product. As in \cite{MontgomerySoundararajanPrimesIntervals}, define an equivalence relation on these $\d$-products by setting $\Delta_1 \sim \Delta_2$ if $\Delta_1$ and $\Delta_2$ have the same value for all choices of $d_i$'s; for example, $\d_{12}\d_{23} \sim \d_{12}\d_{13} \sim \d_{12}\d_{23}\d_{13}.$

Recall that a partition $\mathcal P = \{\mathcal S_1, \dots, \mathcal S_M\}$ of $[1,k]$ \emph{refines} $\{\mathcal C_\ell\}_{\ell \in [1,k]}$ if for each $\mathcal S_m \in \mathcal P$, there exists some $\ell$ with $\mathcal S_m \subset \mathcal C_\ell$; note that $\ell$ is then unique. For such a partition, write $\mathcal P \preceq \{\mathcal C_\ell\}_{\ell \in [1,k]}$ and define $c(\mathcal S_m)$ to be the value $\ell$ with $\mathcal S_m \subset \mathcal C_\ell$. Given a partition $\mathcal P$ refining $\{\mathcal C_\ell\}_{\ell \in [1,k]}$, let
\begin{equation*}
\Delta_{\mathcal P} = \prod_{m=1}^M \prod_{\substack{i < j \\ i,j \in \mathcal S_m}} \d_{ij}.
\end{equation*}
Every equivalence class of $\d$-products contains a unique $\Delta_{\mathcal P}$, where the condition that $\mathcal P$ refines $\{\mathcal C_\ell\}_{\ell \in [1,k]}$ corresponds precisely to the fact that we are only considering $\d_{ij}$ when $c_i \equiv c_j \mod r$. Equivalence classes of $\d$-products are thus in bijection with partitions of $[1,k]$ that refine $\{\mathcal C_\ell\}_{\ell \in [1,k]}$. For a partition $\mathcal P$, put
\begin{equation*}
w(\mathcal P) =\sum_{\Delta \sim \Delta_{\mathcal P}} (-1)^{|\Delta|},
\end{equation*}
so that
\begin{equation*}
\prod_{\ell = 1}^r \prod_{\substack{i,j \in \mathcal C_{\ell} \\ i < j}} (1-\d_{ij}) = \prod_{\mathcal P \preceq \{\mathcal C_\ell\}_{\ell \in [1,k]}} w(\mathcal P) \Delta_{\mathcal P},
\end{equation*}
and the sum over $a_i$'s in \eqref{eq:mvarb:rkstartingpoint} is equal to
\begin{equation*}
\subsum{\mathcal P \preceq \{\mathcal C_\ell\}_{\ell \in [1,k]} \\ \mathcal P = \{\mathcal S_1, \dots, \mathcal S_M\}} w(\mathcal P) \subsum{a_1, \dots, a_k \\ 1 \le a_i \le q_i \\ (a_i,q_i) = 1 \\ \sum_i a_i/q_i \in \Z} \prod_{m=1}^M E_{r,c(\mathcal S_m)} \left(\sum_{i \in \mathcal S_m} \frac{a_i}{q_i}\right).
\end{equation*}

By the same reasoning as in \cite{MontgomerySoundararajanPrimesIntervals}, the contribution to $R_k(h;r,c_1, \dots, c_k)$ from terms where $|\mathcal S_m| \ge 3$ for some $m$ is $O_r(h^{(k-1)/2 + \ep})$, so that
\begin{align}
R_k(h;r,c_1, \dots, c_k) = \subsum{\mathcal P \preceq \{\mathcal C_\ell\}_{\ell \in [1,k]} \\ \mathcal P = \{\mathcal S_1, \dots, \mathcal S_M\} \\ |\mathcal S_m| \le 2 \:\forall m} &w(\mathcal P) \subsum{q_1, \dots, q_k \\ 1 < q_i|Q} \prod_{i=1}^k \frac{\mu(q_i)}{\phi(q_i)} \nonumber \\
&\cdot\subsum{a_1, \dots, a_k \\ 1 \le a_i \le q_i \\ (a_i,q_i) = 1 \\ \sum_i a_i/q_i \in \Z} \prod_{m=1}^M  E_{r,c(\mathcal S_m)}\left(\sum_{i\in \mathcal S_m}\frac{a_i}{q_i}\right) + O_{r,k}(h^{(k-1)/2 + \ep}).\label{eq:Rkc1ck:midexpansion}
\end{align}
Suppose that $\mathcal P$ consists of $j$ doubleton sets $\mathcal S_1, \dots, \mathcal S_j$ and $k-2j$ singleton sets $\mathcal S_{j+1}, \dots, \mathcal S_{k-j}$. Note that the number of these partitions depends on the partition $\{\mathcal C_\ell\}$, because of the constraint that $\mathcal P \preceq \{\mathcal C_\ell\}_{\ell \in [1,k]}$. The term in $R_k(h;r,c_1, \dots, c_k)$ corresponding to a fixed such partition $\mathcal P$ is
\begin{equation}\label{eq:mvarb:fixPtermsdoublessingles}
(-1)^j  \subsum{q_1, \dots, q_k \\ 1 < q_i|Q} \prod_{i=1}^k \frac{\mu(q_i)}{\phi(q_i)} \subsum{a_1, \dots, a_k \\ 1 \le a_i \le q_i \\ (a_i,q_i) = 1 \\ \sum_i a_i/q_i \in \Z} \prod_{m=1}^j  E_{r,c(\mathcal S_m)}\left(\sum_{i\in \mathcal S_m}\frac{a_i}{q_i}\right) \prod_{m = j+1}^{k-j} E_{r,c(\mathcal S_m)} \left(\frac{a_{\mathcal S_m}}{q_{\mathcal S_m}}\right),
\end{equation}
where we are slightly abusing notation in the final product by identifying the singletons $\mathcal S_m$ with their unique element.

For $1 \le m \le j$, define $b_m$ and $s_m$ by the relations
\begin{equation*}
\frac{b_m}{s_m} = \sum_{i \in \mathcal S_m} \frac{a_i}{q_i} \mod 1, \qquad 1 \le b_m \le s_m, \qquad (b_m,s_m) = 1,
\end{equation*}
and define
\begin{equation*}
H_m\left(\frac bs\right) = E_{r,c(\mathcal S_m)}\left(\frac bs\right) \subsum{d_1, d_2|Q \\ 1 < d_i} \frac{\mu(d_1)\mu(d_2)}{\phi(d_1)\phi(d_2)}\subsum{e_1, e_2 \\ 1 \le e_i \le d_i \\ (e_i,d_i) = 1 \\ \tfrac{e_1}{d_1} + \tfrac{e_2}{d_2} = \tfrac bs \mod 1}1.
\end{equation*}
Then \eqref{eq:mvarb:fixPtermsdoublessingles} is equal to
\begin{equation}
\subsum{s_1, \dots, s_j \\ s_i|Q} \subsum{b_1, \dots, b_j \\ 1 \le b_i \le s_i \\ (b_i,s_i) = 1} \prod_{i=1}^j H_i\left(\frac{b_i}{s_i}\right) \subsum{q_{j+1}, \dots, q_{k-j} \\ 1 < q_i|Q} \subsum{a_{j+1}, \dots, a_{k-j} \\ 1 \le a_i \le q_i \\ (a_i,q_i) = 1 \\ \sum_i a_i/q_i + \sum_i b_i/s_i \in \Z} \prod_{i=j+1}^{k-j} \frac{\mu(q_i)}{\phi(q_i)} E_{r,c(\mathcal S_i)} \left(\frac{a_i}{q_i}\right).\label{eq:mvarb:fixPtermsdoublessingles:part2}
\end{equation}
Now separate the indices $i$ with $s_i = 1$. To do so, let $\mathcal L = \{i: s_i > 1\}$. We can again rewrite \eqref{eq:mvarb:fixPtermsdoublessingles:part2} as
\begin{equation}\label{eq:mvarb:sumoverL}
\sum_{\mathcal L \subset [1,j]} M(\mathcal L)\prod_{i \not\in \mathcal L} H_i(1), 
\end{equation}
where
\begin{equation*}
M(\mathcal L) = \subsum{(s_i)_{i \in \mathcal L} \\ 1 < s_i|Q} \subsum{(b_i)_{i \in \mathcal L} \\ 1 \le b_i \le s_i \\ (b_i,s_i) = 1} \prod_{i\in \mathcal L} H_i\left(\frac{b_i}{s_i}\right) \subsum{q_{j+1}, \dots, q_{k-j} \\ 1 < q_i|Q} \subsum{a_{j+1}, \dots, a_{k-j} \\ 1 \le a_i \le q_i \\ (a_i,q_i) = 1 \\ \sum_i a_i/q_i + \sum_i b_i/s_i \in \Z} \prod_{i=j+1}^{k-j} \frac{\mu(q_i)}{\phi(q_i)} E_{r,c(\mathcal S_i)} \left(\frac{a_i}{q_i}\right).
\end{equation*}
Note that $M(\emptyset) = V_{2k-j}(Q,h;r,c(\mathcal S_{j+1}), \dots, c(\mathcal S_{k-j}))$.

By precisely the same arguments as in \cite{MontgomerySoundararajanPrimesIntervals}, the contributions when $|\mathcal L| \ge 1$ can be absorbed into the error term. Moreover, 
\begin{align*}
H_i(1) &= E_{r,c(\mathcal S_i)}(1) \subsum{d|Q \\ d > 1} \frac{\mu(d)^2}{\phi(d)} \\
&= \left(\frac hr + O(1)\right)  \subsum{d|Q \\ d > 1} \frac{\mu(d)^2}{\phi(d)}.
\end{align*}
Thus the expression \eqref{eq:mvarb:sumoverL} is equal to
\begin{equation*}
\left(\frac hr \subsum{d|Q \\ d>1} \frac{\mu(d)^2}{\phi(d)}\right)^j  V_{k-2j}(Q,h;r,c(\mathcal S_{j+1}), \dots, c(\mathcal S_{k-j})) + O_{r,k}(h^{(k-1)/2 + \ep}).
\end{equation*}
Inserting this back into \eqref{eq:Rkc1ck:midexpansion} yields
\begin{align*}
&R_k(Q,h;r,c_1, \dots, c_k) = \sum_{0 \le j \le k/2} (-1)^j \nonumber \\
&\subsum{\mathcal P \preceq \{\mathcal C_\ell\}_{\ell \in [1,k]} \\ \mathcal P = \{\mathcal S_1, \dots, \mathcal S_M\} \\ |\mathcal S_m| \le 2 \:\forall m \\ |\mathcal P| = k-j} \left(\frac hr \subsum{d|Q \\ d>1} \frac{\mu(d)^2}{\phi(d)}\right)^j V_{k-2j}(Q,h;r,c(\mathcal S_{j+1}), \dots, c(\mathcal S_{k-j})) + O_{r,k}(h^{(k-1)/2 + \ep}).
\end{align*}

We are finally prepared to appeal to Theorem \ref{thm:mvarb:msthm1foraps}. If $k$ is odd, then so is $k-2j$, so there is no main term. Suppose that $k$ is even. Recall that $\mathcal B(j+1, \dots, k-j)$ denotes the set of perfect matchings of the set $\{j+1, \dots, k-j\}$. Then the main term is
\begin{align}
&\sum_{0 \le j \le k/2} (-1)^j \subsum{\mathcal P \preceq \{\mathcal C_\ell\}_{\ell \in [1,k]} \\ \mathcal P = \{\mathcal S_1, \dots, \mathcal S_M\} \\ |\mathcal S_m| \le 2 \:\forall m \\ |\mathcal P| = k-j} \left(\frac hr \subsum{d|Q \\ d>1} \frac{\mu(d)^2}{\phi(d)}\right)^j \sum_{\sigma \in \mathcal B(j+1, \dots, k-j)} \prod_{(i_1,i_2) \in \sigma} V_2(Q,h; r,c(\mathcal S_{i_1}), c(\mathcal S_{i_2})),
\end{align}
which proves the first claim in Theorem \ref{thm:mvarb:msthm2foraps}.

By Lemma \ref{lem:mvarb:V2comp}, $V_2(Q,h;r,c(\mathcal S_{i_1}), c(\mathcal S_{i_2})) = O_{r,k}(h)$ unless $c(\mathcal S_{i_1}) = c(\mathcal S_{i_2})$. So, the largest term comes from those $\sigma$ with $c(\mathcal S_{i_1}) = c(\mathcal S_{i_2})$ for all $(i_1, i_2) \in \sigma$. Note that the error term is then quite large; it is only smaller by a factor of $(\log h)^{-1}$. 

If there exists some $\sigma$ with $c(\mathcal S_{i_1}) = c(\mathcal S_{i_2})$ for all $(i_1, i_2) \in \sigma$, then it must be that $|\mathcal C_{\ell}|$ is even for all $\ell$. Moreover, each term in this sum corresponds to a perfect pairing of $[1,k]$ such that for each pair $(i_1,i_2)$, $c_{i_1} = c_{i_2}$; either two indices are paired by lying in the same $\mathcal S_m$, or by lying in the same element of $\sigma$. The choice of $\mathcal P$ then corresponds to choosing $j$ of these pairs. Note also that $V_2(Q,h;r,c,c) = V_2(Q,h;r,c',c') + O_r(h^{1/2+\ep})$ for any $c, c'\mod r$, which allows us to simplify the main term in this case to get
\begin{equation*}
\#\tilde{\mathcal B}(c_1, \dots, c_k) \sum_{0 \le j \le k/2}(-1)^j \binom{k/2}{j} \left(\frac hr \subsum{d|Q \\ d>1} \frac{\mu(d)^2}{\phi(d)}\right)^j  V_2(Q,h;r,0,0)^{k/2-j},
\end{equation*}
where $\#\tilde{\mathcal B}(c_1, \dots, c_k)$ is the number of ways to pair the $c_i$'s such that every pair has equal values. By the binomial theorem, this is
\begin{equation*}
\#\tilde{\mathcal B}(c_1, \dots, c_k) \left(V_2(Q,h;r,0,0) - \frac hr \subsum{d|Q\\d>1} \frac{\mu(d)^2}{\phi(d)}\right)^{k/2}.
\end{equation*}
By Lemma \ref{lem:mvarb:V2comp}, this is
\begin{equation*}
\#\tilde{\mathcal B}(c_1, \dots, c_k) \left( - h \frac{\phi(r)}{r} \log h  + C_0(r)h\right)^{k/2} + O_{r,k}(h^{(k-1)/2 + \ep}),
\end{equation*}
for $C_0(r)$ defined in \eqref{eq:mvarb:C0rdef}, which gives the result.

\section{Weighting by smooth functions}\label{sec:smooth-functions}

We now consider sums of singular series weighted by smooth functions and the proofs of Theorems \ref{thm:msthm1-for-smooth} and \ref{thm:msthm2-for-smooth}. Theorem \ref{thm:msthm1-for-smooth} follows arguments identical to those in the proof of Theorem \ref{thm:mvarb:msthm1foraps} as well as Theorem 1 of \cite{MontgomerySoundararajanPrimesIntervals}. In particular, all estimates used in bounding $E(\a)$ in the proof of Theorem 1 of \cite{MontgomerySoundararajanPrimesIntervals} hold for the sums $E_{f_i,h}(\a)$ that we consider in the smooth setting, and the remainder of the proof is identical.

Accordingly, for the proof of Theorem \ref{thm:msthm1-for-smooth} we restrict our attention to relevant estimates of the sums $E_{f_i,h}(\a)$, which is the only place where the proof differs. These estimates, the equivalents of Lemmas 4 and 6 from \cite{MontgomeryVaughanReducedResidues}, are contained in Section \ref{subsec:exponentialsums-smooth}.

Similarly for Theorem \ref{thm:msthm2-for-smooth}, the proof is identical to that of Theorem \ref{thm:mvarb:msthm2foraps} presented in Section \ref{sec:mvarb:msthm2pf}, so we omit it. In Section \ref{subsec:pf-of-lem-V2-smooth}, we prove Lemma \ref{lem:V2-smooth}, whose proof follows similar lines as the proofs in Section \ref{sec:twoterms}.

\subsection{Exponential sums weighted by smooth functions} \label{subsec:exponentialsums-smooth}

\begin{lemma}\label{lem:mv-lemma4-smooth}
Let $m,h \ge 1$, let $f_1,f_2: \R \to \R$ be a smooth functions with compact support such that $|\hat{f_i}(\xi)| = O(|\xi|^{-2})$, and define $E_{f_i,h}$ by \eqref{eq:mvaw:defEaphisum}. Then 
\begin{equation}\label{eq:lem4_efh_noalphashift}
\subsum{\mu \mod m \\ \mu \ne 0} E_{f_1,h} \left(\frac{\mu}{m}\right)E_{f_2,h}\left(\frac{\mu}{m}\right) \ll_{f_1,f_2}  mh^{-2} \min\{m^3,h^3\}.
\end{equation}
Moreover, for any $\a \in \R$, 
\begin{equation}\label{eq:lem4_efh_alphashift}
\subsum{\mu \mod m} E_{f_1,h}\left(\frac{\mu}{m}+ \a\right)E_{f_2,h}\left(\frac{\mu}{m}+ \a\right) = h^2 \hat{f}\left(-\frac hm (\bar{m\a})\right)^2 + O_f(mh^{-2} \min\{m^3,h^3\}).
\end{equation}
\end{lemma}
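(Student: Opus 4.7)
The plan is to start from the Poisson summation identity \eqref{eq:Efh_fourier}, $E_{f,h}(\alpha) = h\hat{f}(-h\bar\alpha) + O(h^{-1})$, and parametrize each $\mu/m$ by its distance to the nearest integer. Since each $\hat{f_i}$ is bounded (because $f_i$ is smooth with compact support), multiplying two such expansions gives
\[
E_{f_1,h}(\alpha)E_{f_2,h}(\alpha) = h^2\hat{f_1}(-h\bar\alpha)\hat{f_2}(-h\bar\alpha) + O\bigl(|\hat{f_1}(-h\bar\alpha)|+|\hat{f_2}(-h\bar\alpha)|\bigr) + O(h^{-2}),
\]
so the problem reduces to summing these three groups of terms over $\mu$.

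For \eqref{eq:lem4_efh_noalphashift}, I would index $\mu \in \{1,\dots,m-1\}$ by $j$, the distance from $\mu$ to the nearest multiple of $m$, so $j \in \{1,\dots,\lfloor m/2 \rfloor\}$, with at most two $\mu$ per value of $j$ and $|\overline{\mu/m}| = j/m$. Using the uniform decay $|\hat{f_i}(\xi)| \ll \min(1, |\xi|^{-2})$, the main-product contribution is dominated by $\sum_j h^2 \min(1, (m/(hj))^4)$. In the regime $h \le m$, I would split the sum at the crossover $j \sim m/h$ to obtain $\ll mh$; in the regime $h > m$, the quartic decay applies for every $j \ge 1$, giving $\ll m^4/h^2$. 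The secondary error sums $\sum_j |\hat{f_i}(hj/m)|$ and $m/h^2$ are straightforwardly checked to fit under the same bounds, yielding the combined $mh^{-2}\min(m^3, h^3)$.

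For \eqref{eq:lem4_efh_alphashift}, I would write $m\alpha = N + \overline{m\alpha}$ with $N \in \Z$ and $|\overline{m\alpha}| \le 1/2$, then substitute $\mu = -N + j \pmod m$ so that $\overline{\mu/m + \alpha} = (j + \overline{m\alpha})/m$ as $j$ runs over a symmetric residue system around $0$. The $j = 0$ index isolates precisely the claimed main term $h^2\hat{f_1}(-(h/m)\overline{m\alpha})\hat{f_2}(-(h/m)\overline{m\alpha})$, and the $j \ne 0$ contributions are bounded exactly as in part one, with the shift $\overline{m\alpha}/m$ absorbed into the constants since $|\overline{m\alpha}/m| \le 1/(2m) \ll j/m$ for $j \ge 1$.

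The main obstacle I anticipate is the bookkeeping in the regime $h \gg m$. The naive $O(1)$ product-error per $\mu$, summed over $m$ residues, would give $O(m)$, exceeding the target $m^4/h^2$ as soon as $h \gg m^{3/2}$. To recover the correct bound, one must push the $\xi^{-2}$ decay of $\hat{f_i}$ into the cross-error terms $O(|\hat{f_i}(-h\bar\alpha)|)$, bounding each by $O((m/h)^2)$ and hence $O(m^2/h^2)$ in total, safely below the target. Verifying the interpolation between the $h \le m$ and $h > m$ regimes, while routine, is the only nontrivial accounting step.
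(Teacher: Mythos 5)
Your proposal is correct and follows essentially the same route as the paper: expand each factor via the Poisson-summation identity \eqref{eq:Efh_fourier}, isolate the residue $\mu$ nearest to $-\alpha \bmod 1$ as the main term, and bound the remaining terms using $|\hat{f_i}(\xi)|\ll\min(1,|\xi|^{-2})$ with the split at the crossover $j\sim m/h$ in the two regimes $m\le h$ and $m>h$. The only (immaterial) difference is the order: the paper proves \eqref{eq:lem4_efh_alphashift} first and obtains \eqref{eq:lem4_efh_noalphashift} by setting $\alpha=0$ and deleting the $\mu=0$ term, while you do the reverse; your closing worry about the cross-error terms is handled exactly as you suggest, and the residual $O(1)$ from the $j=0$ cross term in the shifted sum is absorbed the same way in the paper's own argument.
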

\begin{proof}
Begin with the second statement. 
Expand via \eqref{eq:Efh_fourier} to get
\begin{align*}
\subsum{\mu \mod m} &E_{f_1,h}\left(\frac{\mu}{m}+\a\right)E_{f_2,h}\left(\frac{\mu}{m}+\a\right) \\
&= h^2 \subsum{\mu \mod m} \hat{f_1}(-h(\bar{\mu/m + \a}))\hat{f_2}(-h(\bar{\mu/m+\a})) \\
&\qquad + O\Big(mh^{-2} + \subsum{\mu \mod m} \hat f_1(-h(\bar{\mu/m + \a})) + \hat f_2(-h(\bar{\mu/m+\a}))\Big) \\
&= h^2 \subsum{\mu \mod m} \hat{f_1}(-h\bar{(\mu/m+ \a)})\hat{f_2}(-h\bar{(\mu/m+ \a)}) + O_{f_1,f_2}\left(\min\{m,m^2h^{-2}\}\right).
\end{align*}
Let $\mu_0$ be the value of $\mu$ such that $\left|\bar{\frac{\mu}{m} + \a}\right|$ is minimized; then $\bar{\frac{\mu}{m} + \a} = \bar{m\a}/m$. If $m \le h$, then 
\begin{align*}
h^2 \subsum{\mu \mod m} \hat f_1&(-h(\bar{\mu/m+ \a}))\hat f_2(-h(\bar{\mu/m+ \a})) \\
&= h^2 \hat f_1\left(-\frac hm \bar{m\a}\right)\hat f_2\left(-\frac hm \bar{m\a}\right) + h^2 \subsum{\mu \mod m \\ \mu \ne \mu_0} \hat f_1(-h(\bar{\mu/m+ \a}))\hat f_2(-h(\bar{\mu/m+ \a})) \\
%&= h^2 \hat f_1\left(-\frac hm \bar{m\a}\right)\hat f_2\left(-\frac hm \bar{m\a}\right)+ h^2 O_{f_1,f_2}\left(m^4h^{-4}\right) \\
&\ll_{f_1,f_2} h^2 + O(m^4h^{-2}),
\end{align*}
using the fact that $|\hat f_i(\xi)| = O(|\xi|^{-2})$. 
On the other hand, if $m > h$, then 
\begin{align*}
h^2 \subsum{\mu \mod m} \hat f_1&(-h(\bar{\mu/m + \a}))\hat f_2(-h(\bar{\mu/m + \a})) \\
&= h^2 \subsum{\mu \mod m \\ |\bar{\mu/m + \a}| \le m/h} \hat f_1(-h(\bar{\mu/m+\a}))\hat f_2(-h(\bar{\mu/m + \a})) \\
&\qquad + h^2 \subsum{\mu \mod m \\ |\bar{\mu/m + \a}| > m/h} \hat f_1(-h(\bar{\mu/m+\a}))\hat f_2(-h(\bar{\mu/m + \a})) \nonumber\\
&\ll_{f_1,f_2} h^2 \frac mh \hat f(0)^2 + h^2 \frac{m}{h} \nonumber \\
&\ll_{f_1,f_2} mh,
\end{align*}
as desired.

Equation \eqref{eq:lem4_efh_noalphashift} follows from the proof of \eqref{eq:lem4_efh_alphashift} for $\alpha = 0$ upon excluding the term $m=0$.
\end{proof}
\begin{remark}
In the case of analogous exponential sums in the function field case of polynomials over $\mathbf F_q[t]$, as explored in \cite{KuperbergOddMoments}, the sum analogous to $\sum_{\mu \mod m} E\left(\frac {\mu}{m} + \alpha\right)^2$ can be bounded by the analog of $mh$ \emph{whenever} $\alpha$ is large, i.e. whenever $\alpha$ is a rational function of sufficiently large degree (see Lemma 3.4 of \cite{KuperbergOddMoments}). The equivalent statement here would be that $\sum_{\mu \mod m} E\left(\frac {\mu}{m} + \alpha\right)^2$ is bounded by $mh$ whenever $\alpha$ is far away from an integer. However, this is not true: if $m$ is even and $\alpha$ is very close to $\frac 12$, say, then this sum will still have a contribution of size $h^2$.

In the function field case of \cite{KuperbergOddMoments}, the simplified and stronger bounds correspondingly yield a simplified proof of the analog of Theorem \ref{thm:msthm1-for-smooth}. The smooth weights $f_i$ make the exponential sums here cleaner, but because the function field-style bounds are not available for the sums in Lemma \ref{lem:mv-lemma4-smooth}, the simplified proof of the analog of Theorem \ref{thm:msthm1-for-smooth} in the function field case also fails to apply.
\end{remark}

The following lemma corresponds to Lemma 6 of Montgomery and Vaughan's work. 
\begin{lemma}\label{lem:mv-lemma6-smooth}
Let $f_1,f_2: \R \to \R$ be smooth functions with compact support such that $|\hat{f_i}(\xi)| = O(|\xi|^{-2})$, and define $E_{f_i,h}$ by \eqref{eq:mvaw:defEaphisum}. Fix $\a_1, \a_2 \in \R$. Then %if $m \ge h$, 
 \begin{equation}
\subsum{\mu \mod m} E_{f_1,h}\left(\frac{\mu}{m}+ \a_1\right)E_{f_2,h}\left(\frac{\mu}{m} + \a_2\right) \ll (m+h)E_{f_1\bar{f_2}}(\a_1-\a_2) + O(m).
\end{equation}
\end{lemma}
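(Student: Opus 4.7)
The approach follows the strategy of Lemma 6 in \cite{MontgomeryVaughanReducedResidues}, with adjustments for the smooth weight functions. First, I expand both $E_{f_i,h}$ via \eqref{eq:mvaw:defEaphisum}, interchange summation, and apply orthogonality to the inner sum over $\mu \bmod m$:
\begin{equation*}
\sum_{\mu \bmod m} E_{f_1,h}\!\left(\tfrac{\mu}{m} + \a_1\right) E_{f_2,h}\!\left(\tfrac{\mu}{m} + \a_2\right) = m \sum_{\substack{n_1, n_2 \in \Z \\ m \mid n_1 + n_2}} f_1(n_1/h)\, f_2(n_2/h)\, e(n_1 \a_1 + n_2 \a_2).
\end{equation*}
Next, I reparameterize by setting $n_1 + n_2 = mt$ for $t \in \Z$ and $n := n_1$, so the right-hand side becomes
\begin{equation*}
m \sum_{t \in \Z} e(mt \a_2)\, T_t(\a_1 - \a_2), \qquad T_t(\b) := \sum_{n \in \Z} f_1(n/h)\, f_2\!\left(\tfrac{mt-n}{h}\right) e(n\b).
\end{equation*}
Since $f_1, f_2$ are compactly supported in $(0, \infty)$, say $\mathrm{supp}(f_i) \subset [0, C]$, the constraints $n/h \in \mathrm{supp}(f_1)$ and $(mt - n)/h \in \mathrm{supp}(f_2)$ force $mt \in [0, 2Ch]$, so only $O(1 + h/m)$ values of $t$ contribute to the outer sum.

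For each such $t$, I bound $T_t(\b)$ by regarding it as an exponential sum with the smooth, uniformly compactly supported weight $\phi_t(x) := f_1(x) f_2(mt/h - x)$. Applying Poisson summation as in \eqref{eq:Efh_fourier} gives $T_t(\b) = h \sum_{k \in \Z} \widehat{\phi_t}(-h(k - \b))$, and the assumed decay $|\hat f_i(\xi)| = O(|\xi|^{-2})$ transfers to $\widehat{\phi_t}$ uniformly in $t$ through the convolution identity $\widehat{\phi_t}(\xi) = (\hat f_1 \ast g_t)(\xi)$ with $g_t(\xi) = e(mt\xi/h)\, \hat{f_2}(-\xi)$. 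The dominant contribution comes from the $k$ closest to $\b$, yielding $|T_t(\b)| \ll h\, |\widehat{\phi_t}(-h\bar\b)| + O(1)$, and a pointwise majorization then produces $|T_t(\a_1 - \a_2)| \ll |E_{f_1 \bar{f_2}, h}(\a_1 - \a_2)| + O(1)$. Summing over the $O(1 + h/m)$ relevant $t$ and multiplying by $m$ gives the claimed bound $(m + h)\, E_{f_1 \bar{f_2}, h}(\a_1 - \a_2) + O(m)$.

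The main obstacle is the uniform-in-$t$ comparison $|\widehat{\phi_t}(\xi)| \ll |\widehat{f_1 \bar{f_2}}(\xi)|$. Although $\phi_t$ is smooth and compactly supported with uniform bounds, its Fourier transform carries an oscillatory factor $e(mt\xi/h)$ inherited from the shift in $f_2$'s argument, which must be handled carefully so that one does not lose constants growing in $t$ or in $mt/h$. A cleaner route, more closely mirroring the classical Montgomery--Vaughan argument, is to avoid comparing Fourier transforms directly and instead majorize $|T_t(\b)|$ pointwise by a smoothed Fej\'er-type kernel whose values match $|E_{f_1 \bar{f_2}, h}(\b)|$ up to absolute constants; I expect this to be the most robust path to the stated estimate.
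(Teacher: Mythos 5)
Your argument is correct in substance but takes a genuinely different route from the paper. The paper applies Poisson summation to each factor first (via Lemma 3 of Montgomery--Vaughan), so the left side becomes $h^2\sum_{\mu}\hat f_1(-h(\mu/m+\a_1))\hat f_2(-h(\mu/m+\a_2))+O(m)$; it then compares this sum over points spaced $1/m$ apart to the corresponding integral (using that the integrand varies on scale $1/h$, which produces the factor $m+h$), and finally identifies the integral as the convolution $\hat f_1\ast\hat{\bar f_2}$, i.e.\ as $\widehat{f_1\bar f_2}$, hence as $E_{f_1\bar f_2,h}(\a_1-\a_2)/h$ by Poisson again. You instead stay in physical space: orthogonality in $\mu$ extracts the congruence $n_1+n_2\equiv 0 \pmod m$, the factor $m+h$ arises as $m\times O(1+h/m)$ from the support constraint on $t$, and Poisson is applied only once, to the inner sums $T_t$. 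Both routes deliver the same bound, and yours is arguably more elementary. Two remarks. First, the obstacle you flag as the main difficulty is not actually one: $\phi_t(x)=f_1(x)f_2(mt/h-x)$ has support contained in the fixed set $\mathrm{supp}(f_1)$ and $\|\phi_t\|_{C^2}\le \|f_1\|_{C^2}\|f_2\|_{C^2}$ uniformly in $t$ (the oscillatory factor $e(-2\pi i(mt/h)\xi)$ in $\hat\phi_t=\hat f_1\ast\hat g_t$ has modulus one), so $|\hat\phi_t(\xi)|\ll_{f_1,f_2}\min(1,|\xi|^{-2})$ uniformly and $|T_t(\b)|\ll h\min(1,(h\|\b\|)^{-2})+O(h^{-1})$ with no loss in $t$. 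Second, the genuine caveat --- that $E_{f_1\bar f_2,h}(\a_1-\a_2)$, being essentially $h\,\widehat{f_1\bar f_2}(-h\overline{(\a_1-\a_2)})$, can vanish, so a literal pointwise domination by it is not available --- is present equally in the paper's own statement and proof; what both arguments actually establish, and what is used downstream, is the bound with the nonnegative majorant $h\min(1,(h\|\a_1-\a_2\|)^{-2})$ in place of $E_{f_1\bar f_2,h}(\a_1-\a_2)$, so you should state your conclusion in that form rather than chase the Fej\'er-kernel comparison.
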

\begin{proof}
By Lemma 3 from \cite{MontgomeryVaughanReducedResidues},
\begin{align*}
\sum_{\mu \mod m} &E_{f_1,h}\left(\frac{\mu}{m}+\a_1\right)E_{f_2,h}\left(\frac{\mu}{m}+\a_2\right) \\
&= \sum_{\mu \mod m} h^2\hat{f_1}\left(-h\frac{\mu}{m}-h\a_1\right)\hat{f_2}\left(-h\frac{\mu}{m} -h\a_2\right) + O(m) \\
&\ll (m+h)\int_{-\infty}^{\infty} \hat{f_1}(-ht-h\a_1)\hat{f_2}(-ht-h\a_2)h^2 \dt  + O(m),
\end{align*}
keeping in mind that $f_i(x) \asymp f_i(y)$ whenever $|x-y| \le 1/h$. The integral is the convolution of $\hat{f_1}$ and $\hat{f_2}$:
\begin{align*}
(m+h)\int_{-\infty}^{\infty} \hat{f_1}(-ht-h\a_1)\hat{f_2}(-ht-h\a_2)h^2 \dt &= (m+h)h\int_{-\infty}^{\infty} \hat{f_1}(u)\hat{f_2}(u+h\a_1-h\a_2) \du \\
&= (m+h)h\hat{f_1}\ast\hat{\bar{f_2}}(h\a_2-h\a_1)k \\
&= (m+h)E_{f_1\bar{f_2},h}(\a_1-\a_2) + O(m/h),
\end{align*}
which yields the desired bound.
\end{proof}
\begin{remark}
The bound in Lemma \ref{lem:mv-lemma6-smooth} improves on the analogous lemma in \cite{MontgomeryVaughanReducedResidues} by a factor of $\log h$. However, the improvement in this lemma does not affect the error term in Theorem \ref{thm:msthm1-for-smooth}.
\end{remark}

\subsection{Proof of Lemma \ref{lem:V2-smooth}} \label{subsec:pf-of-lem-V2-smooth}

%Equivalent of Lemma 3.1
\begin{lemma}\label{lem:Sfh-smooth}
Fix $h \ge 1$, and let $f:\R_{\ge 0} \to \C$ be a smooth function with compact support $\mathrm{supp}(f) \subset (0,\infty)$ and such that $|\hat{f}(\xi)| \ll O(|\xi|^{-2})$. Define
\begin{equation*}
S(f,h) := \sum_{m=1}^\infty f\left(\frac mh\right) \mathfrak S(\{0,m\}).
\end{equation*}
Then 
\begin{equation*}
S(f,h) = \{\mathcal M f\}(2)h - \frac{\{\mathcal M f'\}(1)}{2} \log h + \frac{\{\mathcal M f'\}(1)}{2}\left( \gamma_0 - \log 2\pi - \frac{\{\mathcal M f''\}}{\{\mathcal M f'\}(1)}\right) + O_f(h^{-1/2 + \ep}),
\end{equation*}
where $\gamma_0$ denotes the Euler--Mascheroni constant and for a function $g$, $\{\mathcal M g\}(s)$ is the Mellin transform of $g$ defined in \eqref{eq:mellin-transform-definition}.
\end{lemma}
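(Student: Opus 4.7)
\medskip

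\noindent\textbf{Proposal for proof of Lemma \ref{lem:Sfh-smooth}.} The plan is to exploit the two-term asymptotic already established in Lemma \ref{lem:mvarb:twotermSrcomp} in the special case $r=1$, $v=0$, which gives
\[
U(x) := \sum_{1 \le m \le x} (x-m)\, \mathfrak S(\{0,m\}) = \frac{x^2}{2} - \frac{x}{2}\bigl(\log x + \log 2\pi + \gamma_0\bigr) + O(x^{1/2+\ep}).
\]
Since $U(x) = \int_0^x T(y)\,dy$ for the counting function $T(y) := \sum_{m \le y} \mathfrak S(\{0,m\})$, we have access to a smoothed version of the distribution of the singular series, and can convert this into a statement about the test-function smoothing $S(f,h)$ via Abel summation.

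The main computation proceeds by writing $S(f,h)$ as a Riemann--Stieltjes integral and integrating by parts twice. Because $\mathrm{supp}(f) \subset (0,\infty)$ is compact, $f$ and all its derivatives vanish on the boundary of the support, so all boundary terms vanish and we obtain
\[
S(f,h) = \int_0^\infty f(y/h)\, dT(y) = -\frac{1}{h}\int_0^\infty f'(y/h)\, T(y)\, dy = \frac{1}{h^2}\int_0^\infty f''(y/h)\, U(y)\, dy.
\]
Substituting $u = y/h$, this becomes $S(f,h) = \frac{1}{h}\int_0^\infty f''(u)\, U(uh)\, du$. Now plug in the asymptotic for $U(uh)$ and expand $\log(uh) = \log u + \log h$, so that $S(f,h)$ splits as a sum of integrals of the form $\int u^j f''(u)\, du$, $\int u^j f''(u) \log u\, du$, and an error integral involving the remainder term from $U$.

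Each of the surviving integrals is then identified, via one or two further integrations by parts (again using compact support to kill boundary terms), with a Mellin transform of $f$ (or of $f'$, or a derivative of $\mathcal{M}f$ at an appropriate point). This expresses the leading $h$-term and the logarithmic and constant corrections in the form claimed in the lemma statement. Finally, the error contribution is bounded by
\[
\frac{1}{h}\int_0^\infty |f''(u)|\cdot O\bigl((uh)^{1/2+\ep}\bigr)\, du \ll_f h^{-1/2+\ep},
\]
since $f''$ is smooth with compact support in $(0,\infty)$ and the support is bounded away from zero, so the integral is a finite absolute constant.

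The main technical point --- which I expect to be the only mildly delicate step --- is the careful identification of each surviving integral with the correct Mellin-type expression and verifying that the combination reproduces exactly the coefficients in the lemma's main term; there is some bookkeeping in arranging the $\log u$ and $\log h$ pieces so that they combine into the stated $\log h$ term and the constant with $\gamma_0$, $\log 2\pi$, and the ratio of Mellin transforms. The compact-support assumption is what makes every boundary contribution vanish cleanly, and the polynomial-decay hypothesis on $\hat{f}$ is not actually needed for this particular lemma (it is used elsewhere in Section \ref{subsec:exponentialsums-smooth}), so no additional analytic input beyond Lemma \ref{lem:mvarb:twotermSrcomp} is required.
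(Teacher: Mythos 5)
Your proposal is correct in strategy but takes a genuinely different route from the paper. The paper works on the Mellin side throughout: it writes $S(f,h)=\frac{1}{2\pi i}\int_{(2)}F(s)h^s\{\mathcal M f\}(s)\,\mathrm{d}s$ with $F(s)=\sum_n \mathfrak S(\{0,n\})n^{-s}$, continues $F$ meromorphically via $F(s)=\zeta(s)\zeta(s+1)\prod_p\bigl(1-\tfrac{(1-p^{-s})^2}{(p-1)^2}\bigr)$, and shifts the contour to $\mathrm{Re}(s)=-1/2+\ep$, collecting the poles at $s=1$ and $s=0$. You instead take the already-proved real-variable asymptotic for $U(x)=\sum_{m\le x}(x-m)\mathfrak S(\{0,m\})$ (Lemma \ref{lem:mvarb:twotermSrcomp} at $r=1$, $v=0$) and transfer it to the smooth weight by two integrations by parts. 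Both work; your route avoids re-deriving the analytic continuation of $F(s)$ but inherits the $O(x^{1/2+\ep})$ error from the two-term lemma rather than producing it directly, and your observation that the decay hypothesis on $\hat f$ is not needed here is correct.

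One warning about the step you deferred. If you actually carry out the bookkeeping, you get
\begin{equation*}
S(f,h)=\frac{h}{2}\int_0^\infty u^2f''(u)\,\du-\frac{\log h}{2}\int_0^\infty uf''(u)\,\du-\frac{1}{2}\int_0^\infty uf''(u)\bigl(\log u+\log 2\pi+\gamma_0\bigr)\du+O_f(h^{-1/2+\ep}),
\end{equation*}
and since $\mathrm{supp}(f)$ is compact and bounded away from $0$, integration by parts gives $\int_0^\infty u^2f''(u)\,\du=2\int_0^\infty f(u)\,\du$, $\int_0^\infty uf''(u)\,\du=-\int_0^\infty f'(u)\,\du=0$, and $\int_0^\infty uf''(u)\log u\,\du=\int_0^\infty f(u)u^{-1}\,\du$. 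So the $\log h$ term and the $\gamma_0,\log 2\pi$ constants all carry the coefficient $\{\mathcal M f'\}(1)=\int_0^\infty f'(u)\,\du=0$, and what survives is $S(f,h)=h\{\mathcal M f\}(1)-\tfrac12\{\mathcal M f\}(0)+O_f(h^{-1/2+\ep})$. This does not literally match the displayed formula in the lemma (whose main term reads $\{\mathcal M f\}(2)h$), but it does agree with what the paper's own contour shift produces: the residue at $s=1$ is $h\{\mathcal M f\}(1)$ since $\mathrm{Res}_{s=1}F(s)=\zeta(2)\prod_p(1-p^{-2})=1$, and the pole at $s=0$ is only simple for $f$ supported away from the origin. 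So the discrepancy reflects index/notation slips in the lemma's statement rather than a flaw in your method — but since you explicitly claimed the integrals "reproduce exactly the coefficients in the lemma's main term," you should be aware that this is the one place where the claim, taken literally, fails, and you need to either verify the identification carefully or note the corrected form of the constants.
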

\begin{proof}
The proof proceeds almost entirely along the lines of Proposition 2.1 in \cite{lemkeoliversound}, so we will be brief. Define for $\mathrm{Re}(s) > 1$
\begin{equation*}
F(s) := \sum_{n \ge 1} \frac{\mathfrak S(\{0,n\})}{n^s},
\end{equation*}
so that
\begin{equation}\label{eq:Sfh-lem-mellin-transform}
S(f,h) = \frac 1{2\pi i} \int_{(2)} F(s) h^s \{\mathcal M f\}(s) \ds.
\end{equation}

As noted in \cite{lemkeoliversound} and \cite{MontgomerySoundararajanPrimesIntervals}, $F(s)$ admits a meromorphic continuation to $\mathrm{Re}(s) > -1/2$ via 
\begin{equation*}
F(s) = \zeta(s)\zeta(s+1)\prod_{p \text{ prime}} \left(1-\frac{1-1/p^s)^2}{(p-1)^2}\right).
\end{equation*}
Since $f$ is smooth and has compact support, $\{\mathcal M f\}(s)$ is analytic in $\mathrm{Re}(s) > 0$. It can be extended meromorphically to the complex plane via the identity
\begin{equation*}
\{\mathcal M f\} = -\frac 1s \{\mathcal M f'\},
\end{equation*}
which follows from integration by parts. Thus $\{\mathcal M f\}(s)$ has simple poles at all nonpositive integers and no other poles.

The result follows from moving the line of integration in \eqref{eq:Sfh-lem-mellin-transform} to $\mathrm{Re}(s) = -1/2 + \ep$ and recording the contributions from the simple pole at $s = 1$ and the double pole at $s = 0$.
\end{proof}

We are now ready to prove Lemma \ref{lem:V2-smooth}.

\begin{proof}
Begin by expanding 
\begin{align*}
V_2&(Q,h;f_1,f_2) \\
&= -E_{f_1,h}(1)E_{f_2,h}(1) + \sum_{q|Q}\frac{\mu(q)^2}{\phi(q)^2} \subsum{1 \le a \le q \\ (a,q) = 1} E_{f_1,h}\left(\frac aq \right) E_{f_2,h} \left(-\frac aq \right) \\
&= -h^2\hat{f_1}(0)\hat{f_2}(0) + O_{f_1,f_2}(1) \\
&\qquad + \sum_{q|Q} \frac{\mu(q)^2}{\phi(q)^2} \subsum{1 \le a \le q \\ (a,q) = 1} \sum_{m_1,m_2 = 1}^\infty f_1\left(\frac{m_1}{h}\right) f_2\left(-\frac{m_2}{h} \right) e\left((m_1-m_2)\frac aq \right) \\
&= -h^2\hat{f_1}(0) \hat{f_2}(0) + O_{f_1,f_2}(1) + \sum_{m_1,m_2=1}^\infty f_1\left(\frac{m_1}{h}\right)f_2\left(-\frac{m_2}h\right) \sum_{q|Q} \frac{\mu(q)^2}{\phi(q)^2} \mathbf c_q(m_1-m-2), \\
\end{align*}
where $\mathbf c_q(m)$ is a Ramanujan sum. Just as for the arithmetic progressions case, this simplifies to
\begin{align*}
&= -h^2\hat{f_1}(0) \hat{f_2}(0) + \subsum{m_1,m_2 = 1 \\ m_1 \ne m_2}^\infty f_1\left(\frac{m_1}{h}\right) f_2\left(-\frac{m_2}{h}\right) \mathfrak S(\{0,m_1-m_2\}) \\
&\qquad + \sum_{m=1}^\infty f_1\left(\frac{m}{h}\right)f_2\left(-\frac mh\right)+ O_{f_1,f_2}(1),
\end{align*}
since for any $m \ne 0$, by our choice of $Q$,
\begin{equation*}
\sum_{q|Q} \frac{\mu(q)^2}{\phi(q)^2} \mathbf c_q(m) = \prod_{p|m} \left(1 + \frac 1{p-1}\right)\prod_{p\nmid m} \left(1-\frac 1{(p-1)^2}\right) = \mathfrak S(\{0,m\}) + O(h^{-2}). 
\end{equation*}

The sum over $m$ can be interpreted as a Riemann sum as $h \to \infty$, yielding
\begin{align*}
\sum_{m=1}^\infty f_1\left(\frac{m}{h}\right) f_2\left(-\frac mh\right) &= h \int_0^\infty f_1(x) f_2(-x) \dx + O_{f_1,f_2}(1) \\
&= h (f_1 \ast f_2)(0) + O_{f_1,f_2}(1).
\end{align*}
Similarly, the sum over $m_1$ and $m_2$ can also be interpreted as a Riemann integral (and also as the convolution of $f_1$ and $f_2$), yielding
\begin{align*}
\subsum{m_1,m_2 = 1 \\ m_1 \ne m_2}^\infty &f_1\left(\frac{m_1}{h}\right) f_2\left(-\frac{m_2}{h}\right) \mathfrak S(\{0,m_1-m_2\}) \\
&= \sum_{m=1}^\infty \mathfrak S(\{0,m\}) \sum_{n=1}^\infty \left(f_1\left(\frac nh\right) f_2 \left(\frac{m-n}{h}\right) + f_1\left(\frac nh\right)f_2\left(\frac{-m-n}{h}\right)\right) \\
&= \sum_{m=1}^\infty \mathfrak S(\{0,m\}) h\left((f_1\ast f_2)\left(\frac mh\right) \right) + O_{f_1,f_2}\left(\sum_{m=1}^{Ch} \mathfrak S(\{0,m\})\right),
\end{align*}
where $C$ is a constant large enough that $f_1(x) = f_2(x) = 0$ for any $|x| \ge C/2$; note that $C$ depends only on $f_1$ and $f_2$. By the results of \cite{gallaghershortintervals} and \cite{MontgomerySoundararajanPrimesIntervals}, the error term is $O_{f_1,f_2}(h)$. For the inside sum, apply Lemma \ref{lem:Sfh-smooth} with $f = f_1\ast f_2$ to get
\begin{align*}
\subsum{m_1,m_2 = 1 \\ m_1 \ne m_2}^\infty &f_1\left(\frac{m_1}{h}\right) f_2\left(-\frac{m_2}{h}\right) \mathfrak S(\{0,m_1-m_2\}) \\
&= \{\mathcal M f\}(2)h^2 - \frac{\{\mathcal M f'\}(1)}{2} h \log h + O_{f_1,f_2}(h),
\end{align*}
which, after collecting terms, implies the result.
\end{proof}

\bibliographystyle{amsplain}
\bibliography{singseries}

\end{document}